\DeclareMathOperator*{\argmin}{arg\,min}
\ifcvprfinal\pagestyle{empty}\fi
\begin{document}

\def\vv{{\bf v}}
\def\mm{{\bf m}}
\def\lambdabf{{\bm \lambda}}
\def\x{{\bf x}}
\def\y{{\bf x}'}
\def\yy{{\bf y}}
\def\z{{\bf z}}
\def\bb{{\bf b}}
\def\bfepsilon{{\bm \epsilon}}
\def\Y{{X'}}
\def\rank{{\text{ rank}}}
\def\tr{{\text{ tr}}}
\def\supp{{\text{ card}}}
\def\diag{{\text{ diag}}}
\def\prox{{\text{ prox}}}
\def\sgn{{\text{ sgn}}}
\def\mat{{\mathcal{M}}}
\def\A{{\mathcal{A}}}
\def\bfsigma{{\bm \sigma}}
\newcommand{\skal}[1]{\langle #1 \rangle}
\newcommand{\trunc}[1]{\left[ #1 \right]_+}
\newcommand{\m}[1]{\begin{bmatrix} #1 \end{bmatrix}}
\newcommand{\reals}{\ensuremath{\mathbb{R}}}
\newcommand{\pspace}{\ensuremath{\mathbb{P}}}
\renewcommand{\P}{\ensuremath{\mathcal{P}}}
\newcommand{\dX}{\ensuremath{\Delta X}}
\newcommand{\reg}{\ensuremath{\mathcal{R}}}
\renewcommand{\vec}[1]{\text{vec}\left( #1 \right)}
\def\L{\mathcal{L}}
\def\e{{\mathds{1}}}
\newcommand{\bmat}[1]{\begin{bmatrix}#1 \end{bmatrix}}

\newtheorem{theorem}{Theorem}[section]
\newtheorem{lemma}[theorem]{Lemma}
\newtheorem{proposition}[theorem]{Proposition}
\newtheorem{corollary}[theorem]{Corollary}
\newtheorem{remark}[theorem]{Remark}

\title{Non-Convex Rank/Sparsity Regularization and Local Minima}

\author{Carl Olsson, Marcus Carlsson, Fredrik Andersson, Viktor Larsson\\
Centre for Mathematical Science\\
Lund University\\
{\tt\small \{calle,mc,fa,viktorl\}@maths.lth.se}
}

\maketitle

\begin{abstract}
This paper considers the problem of recovering either a low rank matrix or a sparse vector from observations of linear combinations of the vector or matrix elements.
Recent methods replace the non-convex regularization with $\ell_1$ or nuclear norm relaxations.
It is well known that this approach can be guaranteed to recover a near optimal solutions
if a so called restricted isometry property (RIP) holds.
On the other hand it is also known to perform soft thresholding which results in a shrinking bias which can degrade the solution.

In this paper we study an alternative non-convex regularization term.
This formulation does not penalize elements that are larger than a certain threshold making it much less prone to small solutions.
Our main theoretical results show that if a RIP holds then the stationary points are often well separated, in the sense that their differences must be of high cardinality/rank.
Thus, with a suitable initial solution the approach is unlikely to fall into a bad local minima.
Our numerical tests show that the approach is likely to converge to a better solution than standard $\ell_1$/nuclear-norm relaxation even when starting from trivial initializations.
In many cases our results can also be used to verify global optimality of our method.
\end{abstract}


\section{Introduction}
Sparsity penalties are important priors for regularizing linear systems.
Typically one tries to solve a formulation that minimizes a trade-off between sparsity and residual error such as
\begin{equation}
\mu \supp(\x) +  \|A \x- \bb \|^2,
\label{eq:sparsityobjektive}
\end{equation}
where $\supp(\x)$ is the number of non-zero elements in $\x$, and the matrix $A$ is of size $m \times n$.
Direct minimization of \eqref{eq:sparsityobjektive} is generally considered difficult because of the properties of the $\supp$ function, which is non-convex and discontinuous. The method that has by now become the standard approach is to replace $\supp(\x)$ with the convex $\ell_1$ norm $\|\x\|_1$ \cite{tropp-2015,tropp-2006,candes-etal-cpam-2006,candes-tao-2006,donoho-elad-2002}. This choice can be justified with the $\ell_1$ norm being the convex envelope of the $\supp$ function on the set $\{\x;\|\x\|_\infty \leq 1\}$.
Furthermore, strong performance guarantees can be derived \cite{candes-etal-cpam-2006,candes-tao-2006} if $A$ obeys a RIP
\begin{equation}
(1-\delta_c)\|\x\|^2 \leq \|A \x\|^2 \leq (1+\delta_c)\|\x\|^2,
\label{eq:vectorRIP}
\end{equation}
for all vectors $\x$ with $\supp(\x) \leq c$, where $c$ is a bound on the number of non-zero terms in the sought solution.
The $\ell_1$ approach does however suffer from a shrinking bias.
In contrast to the $\supp$ function the $\ell_1$ norm penalizes both small elements of $\x$, assumed to stem from measurement noise, and large elements, assumed to make up the true signal, equally.
In some sense the suppression of noise also requires an equal suppression of signal.
Therefore non-convex alternatives able to penalize small components proportionally harder have been considered \cite{daubechies-etal-cpam-2010,candes-etal-2008}.
On the downside convergence to the global optimum is not guaranteed.

This paper considers the non-convex relaxation
\begin{equation}
r_\mu(\x) + \|A\x-\bb\|^2,
\label{eq:vectorrelaxation}
\end{equation}
where $r_\mu(\x) = \sum_i \left( \mu-\max(\sqrt{\mu}-|x_i|,0)^2\right)$. Figure~\ref{fig:threeregterms} shows one dimensional illustrations of the $\supp$-function, $\ell_1$-norm and $r_\mu$ term.
It can be shown \cite{jojic2011convex,soubies-etal-siims-2015,larsson-olsson-ijcv-2016} that the convex envelope of
\begin{equation}
\mu \supp(\x) + \|\x - \z \|^2,
\label{eq:suppgoal}
\end{equation}
where $\z$ is some given vector, is
\begin{equation}
r_\mu(\x)+\|\x-\z\|^2.
\label{eq:supprelax}
\end{equation}
Note that similarly to the $\supp$ function $r_\mu$ does not penalize elements that are larger than $\sqrt{\mu}$. In fact it is easy to show that the minimizer $\x^*$ of both \eqref{eq:suppgoal} and \eqref{eq:supprelax} is given by thresholding of $\z$, that is, $x^*_i = z_i$ if $|z_i| > \sqrt{\mu}$ and $x^*_i = 0$ if $|z_i| < \sqrt{\mu}$. If there is an $i$ such that $|z_i|=\sqrt{\mu}$ then the minimizer is not unique. In \eqref{eq:suppgoal} $x^*_i$ can take either the value $0$ or $\sqrt{\mu}$
and in \eqref{eq:supprelax} any convex combination of these.
The regularization term $r_\mu$ is by itself not convex, see Figure~\ref{fig:threeregterms}. However when combined with a quadratic term $\|\x-\z\|^2$, non-convexities are canceled and the result is a convex objective function.
\begin{figure}[htb]
\begin{center}
\includegraphics[width=60mm]{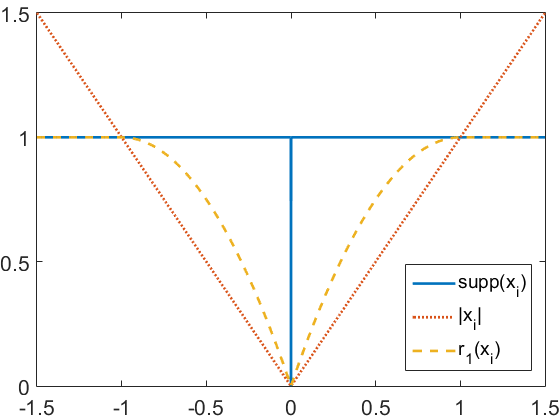}
\end{center}
\caption{One dimensional illustrations of the three regularization terms (when $\mu=1$).}
\label{fig:threeregterms}
\end{figure}

Assuming that $A$ fulfills \eqref{eq:vectorRIP} it is natural to wonder about convexity properties of \eqref{eq:vectorrelaxation}. Intuitively $\|A\x\|^2$ seems to behave like $\|\x\|^2$ which combined with $r_\mu(\x)$ only has one local minimum. In this paper we make this reasoning formal and study the stationary points of \eqref{eq:vectorrelaxation}.
We show that if $\x_s$ is a stationary point of \eqref{eq:vectorrelaxation} and the elements of the vector $\z = (I-A^T A)\x_s+A^T\bb$ fulfill $|z_i| \notin [\sqrt{\mu}(1-\delta_c), \frac{\sqrt{\mu}}{1-\delta_c}]$ then for any other stationary point $\y_s$ we have $\supp(\x_s-\y_s)>c$. A simple consequence is that if we for example find such a local minimizer with $\supp(\x_s) < c/2$ then this is the sparsest possible one.

The meaning of the vector $\z$ can in some sense be understood by seeing that
the stationary point $\x_s$ fulfills $\x_s \in \argmin_\x r_\mu(\x) + \|\x - \z\|^2$, which we prove in Section \ref{sec:sparsity}. 
Hence $\x_s$ can be obtained through thresholding of the vector $\z$.
Our results then essentially state that if the elements of $\z$ are not to close to the thresholds $\pm \sqrt{\mu}$ then $\supp(\x_s-\y_s)>c$ holds for all other stationary points $\y_s$.

In two very recent papers \cite{soubies-etal-siims-2015,carlsson2016convexification} the relationship between (both local and global) minimizers of \eqref{eq:vectorrelaxation} and \eqref{eq:sparsityobjektive} is studied. Among other things \cite{carlsson2016convexification} shows that if $\|A\| \leq 1$ then any local minimizer of \eqref{eq:vectorrelaxation} is also a local minimizer of \eqref{eq:sparsityobjektive}, and that their global minimizers coincide.
Hence results about the stationary points of \eqref{eq:vectorrelaxation} are also relevant to the original discontinuous objective \eqref{eq:sparsityobjektive}.

The theory of rank minimization largely parallels that of sparsity with the elements $x_i$ of the vector $\x$ replaced by the singular values $\sigma_i(X)$ of the matrix $X$.
Typically we want to solve a problem of the type
\begin{equation}
\mu \rank (X) + \|\A X - \bb\|^2,
\label{eq:rankobjective}
\end{equation}
where $\A : \mathbb{R}^{m\times n} \mapsto \mathbb{R}^p$ is some linear operator on the set of $m\times n$ matrices.
In this context the standard approach is to replace the rank function with the convex nuclear norm $\|X\|_* = \sum_i \sigma_i(X)$ \cite{recht-etal-siam-2010,candes-etal-acm-2011}.
It was first observed that this is the convex envelope of the rank function over the set $\{X; \sigma_1(X) \leq 1\}$ in \cite{fazel-etal-acc-2015}.
In \cite{recht-etal-siam-2010} the notion of RIP was generalized to the matrix setting requiring that
$\A$ is a linear operator $\mathbb{R}^{m\times n} \rightarrow \mathbb{R}^k$ fulfilling
\begin{equation}
(1-\delta_r)\|X\|_F^2 \leq \|\A X\|^2 \leq (1+\delta_r)\|X\|_F^2,
\label{eq:matrisRIP}
\end{equation}
for all $X$ with $\rank(X) \leq r$.
Since then a number of generalizations that give performance guarantees for the nuclear norm relaxation have appeared \cite{oymak2011simplified,candes-etal-acm-2011,candes2009exact}.
Non-convex alternatives have also been shown to improve performance \cite{oymak-etal-2015,mohan2010iterative}.

Analogous to the vector setting it was recently shown \cite{larsson-olsson-ijcv-2016} that the convex envelope of
\begin{equation}
\mu \rank(X) + \| X - M \|_F^2,
\end{equation}
is given by
\begin{equation}
r_\mu (\bfsigma(X)) + \|X - M\|_F^2,
\end{equation}
where $\bfsigma(X)$ is the vector of singular values of $X$.
In \cite{andersson-carlsson-2016} an efficient fixed-point algorithm is developed for
objective functions of the type $r_\mu (\bfsigma(X)) + q\|X - M\|_F^2$ with linear constraints.
The approach is illustrated to work well even when $q < 1$ which gives a non-convex objective.

In this paper we consider
\begin{equation}
r_\mu (\bfsigma(X)) + \|\A X - \bb\|^2,
\label{eq:rankrelax}
\end{equation}
where $\A$ obeys \eqref{eq:matrisRIP}.
Our main result states that if $X_s$ is a stationary point of \eqref{eq:rankrelax}
and $Z = (I-\A^* \A)X_s+\A^*\bb$ has no singular values in the interval
$[\sqrt{\mu}(1-\delta_r), \frac{\sqrt{\mu}}{1-\delta_r}]$
then for any other stationary point we have $\rank(X_s-X_s') > r$.

\section{Notation and Preliminaries}
In this section we introduce some preliminary material and notation. 
In general we will use boldface to denote a vector $\x$ and its $i$th element $x_i$.
By $\|\x\|$ we denote the standard euclidean norm $\|\x\| = \sqrt{\x^T \x}$.
We use $\sigma_i(X)$ to denote the $i$th singular value of a matrix $X$. 
The vector of all singular values is denoted $\bfsigma(X)$. 
A diagonal matrix with diagonal elmenents $\x$ will be denoted $D_\x$.
The scalar product is defined as $\skal{X,Y} = \tr(X^T Y)$, where $\tr$ is the trace function, and the Frobenius norm $\|X\|_F = \sqrt{\skal{X,X}}=\sqrt{\sum_{i=1}^n\sigma_i^2(X)}$.
The adjoint of a the linear matrix operator $\A$ is denoted $\A^*$.
For functions taking values in $\mathbb{R}$ such as $r_\mu$ we will frequently use the convention that $r_\mu(\x) = \sum_i r_\mu(x_i)$.

The function $g(x) = r_\mu(x)+x^2$ will be useful when considering stationary points, since it is convex with a well defined sub-differential. We can write $g$ as
\begin{equation}
g(x) = 
\begin{cases}
\mu + x^2 & |x| \geq \sqrt{\mu} \\
2\sqrt{\mu}|x| & 0 \leq |x| \leq \sqrt{\mu}
\end{cases}.
\label{eq:gdef}
\end{equation}
Its sub-differential is given by
\begin{equation}
\partial g(x) = 
\begin{cases}
\{2 x\} & |x| \geq \sqrt{\mu} \\
\{2\sqrt{\mu}\text{sign}(x)\} & 0 < |x| \leq \sqrt{\mu} \\
[-2\sqrt{\mu},2\sqrt{\mu}] & x = 0
\end{cases}.
\label{eq:vectorsubgrad}
\end{equation}
Note that the sub-differential consists of a single point for each $x \neq 0$.
By $\partial g(\x)$ we mean the set of vectors $\{\z; z_i \in \partial g(x_i), \ \forall i\}$.
Figure~\ref{fig:gfunk} illustrates $g$ and its sub-differential.
\begin{figure}[htb]
\begin{center}
\includegraphics[width=40mm]{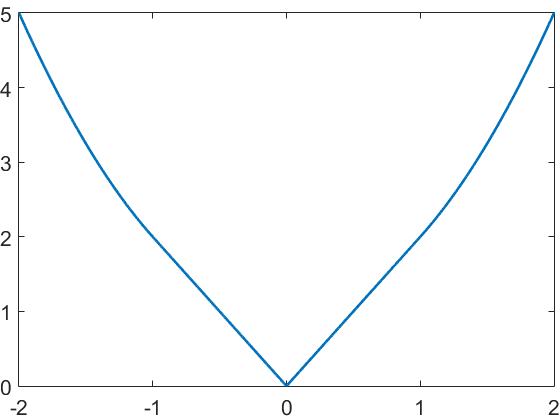} 
\includegraphics[width=40mm]{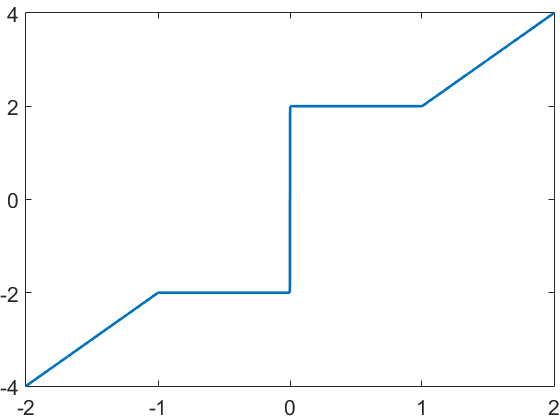} 
\end{center}
\caption{The function $g(x)$ (left) and its sub-differential $\partial g(x)$ (right). Note that the sub-differential contains a unique element everywhere except at $x=0$.}
\label{fig:gfunk}
\end{figure}
For the matrix case we similarly define $G(X) = r_\mu(\bfsigma(X)) + \|X\|_F^2$.
It can be shown \cite{lewis-95} that a matrix $Z$ is in the sub-differential of $G$ at $X$  if and only if
\begin{equation}
Z = U D_\z V^T, \text{ where } \z \in \partial g(\bfsigma(X)),
\end{equation}
where $U D_{\bfsigma(X)} V^T$ is the SVD of $X$.
Roughly speaking the sub-differential at $X$ is obtained by taking the sub-differential at each singular value.

In Section~\ref{sec:rank} we utilize the notion of doubly sub-stochastic (DSS) matrices \cite{andersson-etal-pams-2016}.
A matrix $M$ is DSS if its rows and columns fulfill $\sum_i |m_{ij}| \leq 1$ and $\sum_j |m_{ij}| \leq 1$.
The DSS matrices are closely related to permutations. 
Let $\pi$ denote a permutation and $M_{\pi,\vv}$ the matrix with elements $m_{i,\pi(i)}=v_i$ and zeros otherwise.
It is shown in \cite{andersson-etal-pams-2016} (Lemma 3.1) that an $m \times m$ matrix is DSS if and only if it lies in the convex hull of the set
$\{M_{\pi,\vv}; \pi \text{ is a permutation}, \ |v_i|=1 \ \forall i \}$. The result is actually proven for matrices with complex entries, but the proof is identical for real matrices.

\section{Sparsity Regularization}\label{sec:sparsity}
In this section we consider stationary points of the proposed sparsity formulation.
Let
\begin{equation}
f(\x) = r_\mu(\x) + \|A\x-\bb\|^2.
\end{equation}
The function $f$ can equivalently be written
\begin{equation}
f(\x) = g(\x) + \x^T(A^T A - I)\x-2\x^TA^T\bb + \bb^T \bb.
\end{equation}
Taking derivatives we see that the stationary points solve
\begin{equation}
2(I -A^T A)\x_s+2A^T\bb \in \partial g(\x_s).
\label{eq:vectorstationary}
\end{equation}
The following lemma clarifies the connection between a stationary point $\x_s$ and the vector
$\z =  (I -A^T A)\x_s+A^T\bb$.

\begin{lemma}\label{lemma:lowrankstatpt}
The point $\x_s$ is stationary in $f$ if and only if $2\z \in \partial g(\x_s)$ and if and only if
\begin{equation}\label{ancud}
\x_s \in \argmin_\x r_\mu(\x) + \|\x -  \z\|^2.
\end{equation}
\end{lemma}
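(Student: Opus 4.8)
The plan is to prove the two stated equivalences by funneling both through the single condition $2\z \in \partial g(\x_s)$, which then acts as the hinge connecting stationarity of $f$ to global optimality of the auxiliary problem \eqref{ancud}. Since the first characterization is almost a restatement of what has already been derived, the real content is the second, and the key realization is that the quadratically regularized objective in \eqref{ancud} is convex even though $r_\mu$ is not.

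First I would establish the equivalence between stationarity of $f$ and $2\z \in \partial g(\x_s)$. Starting from the decomposition $f(\x) = g(\x) + \x^T(A^TA-I)\x - 2\x^TA^T\bb + \bb^T\bb$, the non-smooth part $g$ is convex and the remaining quadratic terms are smooth, so the subdifferential splits as $\partial f(\x_s) = \partial g(\x_s) + 2(A^TA-I)\x_s - 2A^T\bb$ and the sum rule applies with no hard analysis. Hence $0 \in \partial f(\x_s)$ is exactly condition \eqref{eq:vectorstationary}. Substituting the definition $\z = (I-A^TA)\x_s + A^T\bb$ rewrites the right-hand side of \eqref{eq:vectorstationary} as $2\z$, which gives the first equivalence.

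For the second equivalence I would complete the square in the auxiliary objective. Writing $h(\x) = r_\mu(\x) + \|\x-\z\|^2 = g(\x) - 2\skal{\x,\z} + \|\z\|^2$ exposes $h$ as a convex function of $\x$ (with $\z$ held fixed at the value determined by $\x_s$), since $g$ is convex and the remaining terms are affine plus a constant; this is precisely the convexity already noted for \eqref{eq:supprelax}. The first-order optimality condition for convex functions then states that $\x_s \in \argmin_\x h(\x)$ if and only if $0 \in \partial h(\x_s) = \partial g(\x_s) - 2\z$, i.e.\ $2\z \in \partial g(\x_s)$. Chaining this with the first equivalence yields all three statements together.

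The main point to be careful about is that $\z$ itself depends on $\x_s$, so \eqref{ancud} is really a fixed-point characterization rather than a plain minimization against a fixed target: the $\argmin$ is taken over $\x$ with $\z$ frozen at its $\x_s$-value, and I would state this explicitly to avoid circularity. Beyond that there is no genuine obstacle, since the convexity of $h$ turns the passage from stationarity to \emph{global} optimality of \eqref{ancud} into a one-line application of convex subdifferential calculus rather than a local-minimum argument.
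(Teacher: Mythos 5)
Your proof is correct and takes essentially the same route as the paper: both funnel the two equivalences through the single condition $2\z \in \partial g(\x_s)$, and both use convexity of $r_\mu(\x) + \|\x-\z\|^2$ (i.e.\ that it equals the convex $g(\x)$ plus affine terms in $\x$) to upgrade stationarity of the auxiliary problem to global minimality in \eqref{ancud}. The only cosmetic difference is that the paper gets the second equivalence by substituting $A=I$, $\bb=\z$ into \eqref{eq:vectorstationary}, whereas you expand the objective directly; your explicit remark that $\z$ is frozen at its $\x_s$-value, making \eqref{ancud} a fixed-point characterization, is a worthwhile clarification the paper leaves implicit.
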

\begin{proof}
By \eqref{eq:vectorstationary} we know that $\x_s$ is stationary in $f$ if and only if
$2\z \in \partial g(\x_s)$. Similarly, inserting $A = I$ and $\bb = \z$ in \eqref{eq:vectorstationary} shows that $\x_s$ is stationary in $r_\mu (\x) + \left\|\x- \z\right\|^2$
if and only if
$2\z \in \partial g(\x_s)$.
Since $r_\mu (\x) + \left\|\x- \z\right\|^2$ is convex in $\x$, this is equivalent to solving \eqref{ancud}.
\end{proof}

The above result shows that stationary points of $f$ are sparse approximations of $\z$ in the sense that small elements are suppressed. The elements of $\x_s$ are either zero or have magnitude larger than $\sqrt{\mu}$ assuming that the vector $\z$ has no elements that are precisely $\pm \sqrt{\mu}$. The term $\|\x - \z\|^2$ can also be seen as a local approximation of $\|A \x - \bb\|^2 =$
\begin{equation}
\|\x\|^2 - \x^T(I-A^T A)\x -2\bb^T A \x +\bb^T \bb.
\label{eq:expansion}
\end{equation}
Replacing $\x^T(I-A^T A)\x$ with its first order Taylor expansion $2\x^T(I-A^T A)\x_s$
(ignoring the constant term) reduces \eqref{eq:expansion} to
$\|\x-\z\|^2+C$, where $C$ is a constant.

\subsection{Stationary points under the RIP constraint}
We now assume that $A$ is a matrix fulfilling the RIP \eqref{eq:vectorRIP}.
We can equivalently write
\begin{equation}
f(\x)
 =  g(\x) - \delta_c\|\x\|^2
+ h(\x) +\|\bb\|^2,
\label{eq:vectorobj}
\end{equation}
where
\begin{equation}
h(\x) = \delta_c \|\x\|^2 + \left(\|A\x\|^2-\|\x\|^2\right)-2\x^T A^T \bb.
\end{equation}
The term $\|\bb\|^2$ is constant with respect to $\x$ and we can therefore drop it without affecting the optimizers.
The point $\x$ is a stationary point of $f$ if $2\delta_c \x -\nabla h(\x) \in \partial g(\x)$, that is there is a vector $2\z \in \partial g(\x)$ such that $2\delta_c \x -\nabla h(\x) = 2\z$.

Our goal is now to find constraints that assure that this system of equations have only one sparse solution. Before getting into the details we outline the overall idea.
For simplicity consider two differentiable strictly convex functions $\tilde{h}$ and $\tilde{g}$.
Their sum is minimized by the stationary point $\x_s$ fulfilling $- \nabla \tilde{h}(\x_s) = \nabla \tilde{g}(\x_s)$. 
Since $\tilde{g}$ is strictly convex its directional derivative $\skal{\nabla \tilde{g} (\x_s + t \vv), \vv}$ is increasing for all directions $\vv \neq 0$. 
Similarly $\skal{-\nabla \tilde{h} (\x_s + t \vv), \vv}$ is decreasing for all $\vv\neq 0$ since $-\tilde{h}$ is strictly concave. Therefore $\skal{-\nabla \tilde{h} (\x_s + t \vv), \vv} < \skal{\nabla \tilde{g} (\x_s + t \vv), \vv}$ which means that $\x_s$ is the only stationary point.
In what follows we will estimate the growth of the directional derivatives of the functions involved in \eqref{eq:vectorobj} in order to show a similar contradiction.
For the function $h$ we do not have convexity, however due to \eqref{eq:vectorRIP} we shall see that it behaves essentially like a convex function for sparse vectors $\x$. Additionally, because of the non-convex perturbation $-\delta_c \|\x\|^2$ we need somewhat sharper estimates than just growth of the directional derivatives of $g$.

We fist consider the estimate for the derivatives of $h$. Note that $
\nabla h(\x)=2 \delta_c \x + 2(A^T A - I) \x -2 A^T b$,
and therefore
\begin{equation*}
\skal{\nabla h(\x+\vv)- \nabla h(\x), \vv} = 2\delta_c\|\vv\|^2+2\left(\|A \vv\|^2- \|\vv\|^2\right).
\end{equation*}
Applying \eqref{eq:vectorRIP} now shows that
\begin{equation}
\skal{\nabla h(\x+\vv)-\nabla h(\x), \vv} \ge 2\delta_c\|\vv\|^2-2\delta_c\|\vv\|^2 = 0,
\label{eq:hbnd}
\end{equation}
when $\supp(\vv)\leq c$.

Next we need a similar bound on the sub-gradients of $g$. 
In order to guarantee uniqueness of a sparse stationary point we need to show that they grow faster than $2\delta_c \|\vv\|^2$.
The following three lemmas show that provided that the vector $\z$, where $2\z \in \partial g (\x)$ is the sub-gradient, has elements that are not too close to the thresholds $\pm \sqrt{\mu}$ this will be true.

\begin{lemma}\label{lemma:bnd1}
Assume that $2\z \in \partial g(\x)$.
If 
\begin{equation}
\left|
z_i
\right| > \frac{\sqrt{\mu}}{1- \delta_c}
\label{eq:subgradbnd1}
\end{equation}
then for any $\z'$ with $2\z' \in \partial g(\x+\vv)$ we have
\begin{equation}
z_i' -z_i >  \delta_c v_i \quad \text{ if } v_i > 0
\end{equation}
and
\begin{equation}
z_i' -z_i <  \delta_c v_i \quad \text{ if } v_i < 0.
\end{equation}
\end{lemma}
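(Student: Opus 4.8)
Since $g$ is applied coordinate-wise and both $\partial g(\x)$ and $\partial g(\x+\vv)$ factor over coordinates, this is a purely scalar statement about the single index $i$, so I would fix $i$ and suppress it, writing $x,v,z,z'$ for $x_i,v_i,z_i,z_i'$, and recall $0\le\delta_c<1$. The first step is to pin down $x$ from the hypothesis. Reading off \eqref{eq:vectorsubgrad}, the only branch of $\partial g$ producing a value of magnitude exceeding $\sqrt{\mu}$ is $|x|\ge\sqrt{\mu}$, where $\partial g(x)=\{2x\}$; the remaining two branches yield $|z|\le\sqrt{\mu}$. Hence $|z|>\frac{\sqrt{\mu}}{1-\delta_c}>\sqrt{\mu}$ forces $z=x$ and $|x|=|z|>\frac{\sqrt{\mu}}{1-\delta_c}$. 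Because $g$ is even, the substitution $x\mapsto -x,\ v\mapsto -v$ (under which $z,z'$ flip sign and the two claimed inequalities interchange) lets me assume without loss of generality that $x=z>\frac{\sqrt{\mu}}{1-\delta_c}$.

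Next I would reformulate the target geometrically. Setting $t=x+v$, both claimed inequalities say that the half-subgradient value $z'$ at $t$ lies on the correct side of the affine line $\ell(t)=x+\delta_c(t-x)$ through $(x,z)=(x,x)$ with slope $\delta_c$: I must show $z'>\ell(t)$ when $t>x$ and $z'<\ell(t)$ when $t<x$. Since $\frac{1}{2}\partial g(t)$ is single-valued except at $t=0$, it suffices to compare $\ell$ against the largest (resp.\ smallest) element of it, i.e.\ the piecewise-linear nondecreasing slope function equal to $t$ for $|t|\ge\sqrt{\mu}$, to $\pm\sqrt{\mu}$ on $0<\pm t\le\sqrt{\mu}$, and to $[-\sqrt{\mu},\sqrt{\mu}]$ at $t=0$.

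The case $t>x$ is immediate: there $t>\sqrt{\mu}$, so $z'=t$ and $z'-\ell(t)=(1-\delta_c)(t-x)>0$. The real work is the case $t<x$, where I verify $z'<\ell(t)$ piece by piece. On $|t|\ge\sqrt{\mu}$ the same identity $z'-\ell(t)=(1-\delta_c)(t-x)<0$ settles it, so the delicate region is where the subgradient saturates, $|t|\le\sqrt{\mu}$: there $z'\le\sqrt{\mu}$ while $\ell$ is increasing, and the binding subcase is $0\le t\le\sqrt{\mu}$ with $z'=\sqrt{\mu}$, where $\ell(t)\ge\ell(0)=(1-\delta_c)x$. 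I expect this to be the crux of the lemma: the flat part of the subgradient is the only thing that can fail, and the threshold $\frac{\sqrt{\mu}}{1-\delta_c}$ is exactly calibrated to keep it below $\ell$, since $x>\frac{\sqrt{\mu}}{1-\delta_c}$ gives precisely $\ell(0)=(1-\delta_c)x>\sqrt{\mu}$. Assembling the pieces, with the worst case at $t=0$ reducing to $\sqrt{\mu}<(1-\delta_c)x$, completes the argument.
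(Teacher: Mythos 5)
Your proposal is correct and takes essentially the same route as the paper's proof: pin down $x_i = z_i > \frac{\sqrt{\mu}}{1-\delta_c}$ from the structure of $\partial g$, dispose of the case $v_i>0$ immediately, and for $v_i<0$ show that all subgradients at points below $x_i$ stay under the line of slope $\delta_c$ through $(x_i,z_i)$ --- the paper's line $l(x)=2z_i+2\delta_c(x-x_i)$ is exactly twice your $\ell(t)$, and its key inequality $l(0)>2\sqrt{\mu}$ is your calibration $(1-\delta_c)x>\sqrt{\mu}$. The only cosmetic differences are your explicit even-symmetry reduction (where the paper simply says the case $x_i<0$ is similar) and that the paper writes out the one-line check on $-\sqrt{\mu}\le t\le 0$ that you correctly dismiss as non-binding.
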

\begin{proof}
We first assume that $x_i > 0$. Because of \eqref{eq:subgradbnd1} and \eqref{eq:vectorsubgrad} we have $x_i = z_i > \frac{\sqrt{\mu}}{1-\delta_c}$.
There are now two possibilities:
\begin{itemize}
\item If $v_i > 0$ then $x_i+v_i > \sqrt{\mu}$ and by \eqref{eq:vectorsubgrad} we therefore must have that $z'_i = x_i+v_i = z_i + v_i > z_i + \delta_c v_i$.
\item If $v_i < 0$ we consider the line 
\begin{equation}
l(x) = 2z_i + 2 \delta_c (x-x_i).
\end{equation}
See the left graph of Figure~\ref{fig:linefig1}. We will show that this line is an upper bound on the sub-gradients for all $v_i < 0$.
\begin{figure*}[htb]
\begin{center}
\includegraphics[width=70mm]{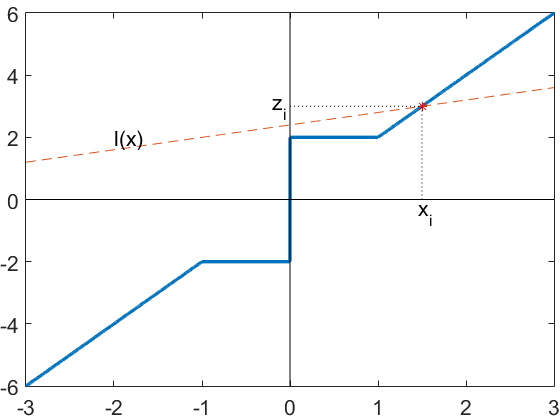}
\includegraphics[width=70mm]{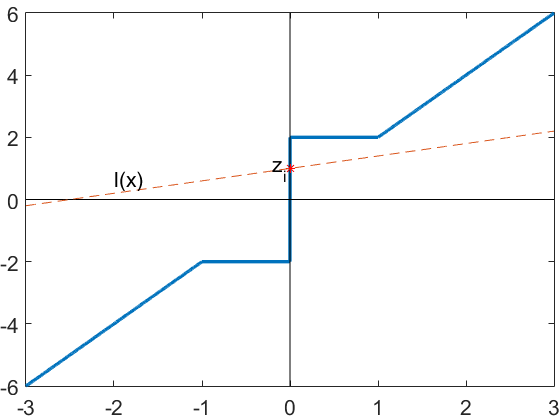}
\end{center}
\caption{Illustration of the subdifferential $\partial g(x)$ and the line $l(x)$, when ~\eqref{eq:subgradbnd1} holds (left) and when \eqref{eq:subgradbnd2} holds (right). }
\label{fig:linefig1}
\end{figure*}

We note that for $x < x_i$ we have
\begin{equation}
l(x) = \underbrace{2z_i-2 x_i}_{=0}+\underbrace{2(1-\delta_c) x_i}_{> 2(1-\delta_c)x} +2 \delta_c x > 2x.
\end{equation}
Furthermore
\begin{equation}
l(x) = 2(1-\delta_c)x_i + 2 \delta_c x > 2\sqrt{\mu}+2\delta_c x.
\end{equation}
The right hand side is clearly larger than both $2\sqrt{\mu}$ for $x \geq 0$. For $-\sqrt{\mu} \leq  x \leq 0$ we have $2\sqrt{\mu}+2\delta_c x > 2\sqrt{\mu}+2x \geq 0 \geq -2\sqrt{\mu}$. This shows that the line $l(x)$ is an upper bound on the subgradients of $g$ 
for every $x < x_i$, that is $l(x_i+v_i) > 2z_i'$ for all $v_i < 0$ and since $l(x_i+v_i)=2z_i+2\delta_c v_i$ we get $2z'_i < 2z_i + 2 \delta_c v_i$.
\end{itemize}
The proof for the case $x_i < 0$ is similar.
\end{proof}

\begin{lemma}\label{lemma:bnd2}
Assume that $2\z \in \partial g(\x)$.
If 
\begin{equation}
\left|
z_i
\right| < (1- \delta_c)\sqrt{\mu}
\label{eq:subgradbnd2}
\end{equation}
then for any $\z'$ with $2\z' \in \partial g(\x+\vv)$ we have
\begin{equation}
z_i' -z_i >  \delta_c v_i \quad \text{ if } v_i > 0
\end{equation}
and
\begin{equation}
z_i' -z_i <  \delta_c v_i \quad \text{ if } v_i < 0.
\end{equation}
\end{lemma}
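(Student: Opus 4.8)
The plan is to mirror the geometric line argument used in Lemma~\ref{lemma:bnd1}, but now centered at the origin rather than at a large value of $x_i$. The first step is to notice that the hypothesis \eqref{eq:subgradbnd2} pins down $x_i$ completely. Indeed, reading off \eqref{eq:vectorsubgrad}, every sub-gradient $2z_i \in \partial g(x_i)$ with $x_i \neq 0$ satisfies $|2z_i| \geq 2\sqrt{\mu}$, hence $|z_i| \geq \sqrt{\mu} > (1-\delta_c)\sqrt{\mu}$, which contradicts \eqref{eq:subgradbnd2}. Therefore $x_i = 0$, and $2z_i$ is some point strictly inside $[-2\sqrt{\mu}, 2\sqrt{\mu}]$ with $|2z_i| < 2(1-\delta_c)\sqrt{\mu}$. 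With $x_i = 0$ fixed I would introduce the line
\[
l(x) = 2z_i + 2\delta_c x,
\]
passing through $(0, 2z_i)$ with slope $2\delta_c$ (the right panel of Figure~\ref{fig:linefig1}). Since $x_i+v_i = v_i$, the two claimed inequalities are precisely the statements that $2z_i' = l(v_i) $ is dominated by the sub-gradient, i.e. that $l$ is a lower bound on $\partial g$ for $x>0$ (giving $2z_i' > l(v_i)$) and an upper bound for $x<0$ (giving $2z_i' < l(v_i)$).

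The second step treats $v_i > 0$ by splitting on whether $v_i$ lands inside or outside $[0,\sqrt{\mu}]$. For $0 < v_i \leq \sqrt{\mu}$ formula \eqref{eq:vectorsubgrad} gives $z_i' = \sqrt{\mu}$, and the bound $z_i + \delta_c v_i < (1-\delta_c)\sqrt{\mu} + \delta_c\sqrt{\mu} = \sqrt{\mu} = z_i'$ follows from \eqref{eq:subgradbnd2} together with $v_i \leq \sqrt{\mu}$. For $v_i \geq \sqrt{\mu}$ we have $z_i' = v_i$, and the desired inequality reduces to $(1-\delta_c)v_i > z_i$, which holds since $(1-\delta_c)v_i \geq (1-\delta_c)\sqrt{\mu} > z_i$. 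The third step, the case $v_i < 0$, is the exact mirror image using $z_i > -(1-\delta_c)\sqrt{\mu}$: for $-\sqrt{\mu} \leq v_i < 0$ one has $z_i' = -\sqrt{\mu}$ and $z_i + \delta_c v_i > -\sqrt{\mu}$, while for $v_i \leq -\sqrt{\mu}$ one has $z_i' = v_i$ and $(1-\delta_c)v_i < z_i$.

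There is no serious obstacle here; the argument is a finite case check, and each boundary estimate closes because of the convex split $(1-\delta_c) + \delta_c = 1$. The one point requiring care is the opening observation that \eqref{eq:subgradbnd2} forces $x_i = 0$ exactly, rather than merely $|x_i| \leq \sqrt{\mu}$: this is what places the base point of $l$ at the single jump of the sub-differential and supplies the strict slack in each estimate. In contrast to Lemma~\ref{lemma:bnd1}, where the line had to be shown to clear a far-away constant piece of $\partial g$, here both inequalities emanate symmetrically from the origin, so the same line handles the $v_i>0$ and $v_i<0$ cases without any separate bookkeeping.
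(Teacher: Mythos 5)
Your proof is correct and follows essentially the same route as the paper's: after deducing $x_i=0$ from \eqref{eq:subgradbnd2}, you compare the sub-gradient at $x_i+v_i$ against the line $l(x)=2z_i+2\delta_c x$, splitting at $\sqrt{\mu}$ and closing each case with the identity $(1-\delta_c)\sqrt{\mu}+\delta_c\sqrt{\mu}=\sqrt{\mu}$ (resp.\ $(1-\delta_c)x+\delta_c x=x$), exactly as in the paper. The only cosmetic difference is that the paper states the bound uniformly for all $x>0$ while you verify it pointwise at $v_i$, and you write out the $v_i<0$ case that the paper leaves as ``similar.''
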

\begin{proof}
By \eqref{eq:subgradbnd2} we see that $x_i=0$. We first assume that $v_i > 0$ and consider the line $l(x) = 2z_i + 2\delta_c x$, see the right graph of Figure~\ref{fig:linefig1}.
We have that
\begin{equation}
l(x) < 2(1-\delta_c)\sqrt{\mu} + 2\delta_c x. 
\end{equation}
The right hand side is less than $2(1-\delta_c)\sqrt{\mu} + 2\delta_c \sqrt{\mu} = 2\sqrt{\mu}$ when $0 < x \leq \sqrt{\mu}$ and less than $2(1-\delta_c)x + 2\delta_c x = 2x$ when $x > \sqrt{\mu}$.
Therefore $l(x)$ is a lower bound on the subgradients of $g$ for all $x>0$ which gives
$l(v_i) < 2z'_i$ for $v_i > 0$ and since $l(v_i) = 2z_i+2 \delta_c v_i$ we get $2z_i' > 2z_i + 2\delta_c v_i$. 
The case $v_i < 0$ is similar.
\end{proof}

\begin{lemma}\label{lemma:subgradbnd}
Assume that $2\z \in \partial g(\x)$. If the elements $z_i$ fulfill
$|z_i| \notin [(1-\delta_c)\sqrt{\mu},\frac{\sqrt{\mu}}{1-\delta_c}]$ for every $i$, then for any $\z'$ with $2\z' \in \partial g(\x+\vv)$ we have
\begin{equation}
\skal{\z'-\z,\vv} >  \delta_c \|\vv\|^2,
\end{equation}
as long as $\|\vv\| \neq 0$.
\end{lemma}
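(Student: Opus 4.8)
The plan is to prove the inequality coordinatewise and then sum. First I would observe that the hypothesis $|z_i| \notin [(1-\delta_c)\sqrt{\mu},\frac{\sqrt{\mu}}{1-\delta_c}]$ means that for each index $i$ exactly one of the two regimes already handled applies: either $|z_i| > \frac{\sqrt{\mu}}{1-\delta_c}$, so Lemma~\ref{lemma:bnd1} is in force, or $|z_i| < (1-\delta_c)\sqrt{\mu}$, so Lemma~\ref{lemma:bnd2} is in force. The decisive point is that both lemmas deliver the \emph{same} conclusion, namely $z_i' - z_i > \delta_c v_i$ when $v_i > 0$ and $z_i' - z_i < \delta_c v_i$ when $v_i < 0$. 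Hence no case analysis on the magnitude of $z_i$ is needed beyond selecting which lemma to invoke.

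Next I would convert each of these sign-dependent inequalities into a single sign-free estimate by multiplying through by $v_i$. If $v_i > 0$, multiplying $z_i' - z_i > \delta_c v_i$ by the positive number $v_i$ preserves the direction and gives $(z_i' - z_i)v_i > \delta_c v_i^2$; if $v_i < 0$, multiplying $z_i' - z_i < \delta_c v_i$ by the negative number $v_i$ reverses the inequality and again yields $(z_i' - z_i)v_i > \delta_c v_i^2$. For $v_i = 0$ both sides vanish and we have equality. Therefore for every coordinate
\begin{equation*}
(z_i' - z_i)\, v_i \geq \delta_c v_i^2,
\end{equation*}
with strict inequality precisely when $v_i \neq 0$.

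Finally I would sum over $i$. Since $\skal{\z'-\z,\vv} = \sum_i (z_i'-z_i)v_i$ and $\|\vv\|^2 = \sum_i v_i^2$, summation of the per-coordinate bound gives $\skal{\z'-\z,\vv} \geq \delta_c\|\vv\|^2$. Because $\|\vv\| \neq 0$ there is at least one index $i$ with $v_i \neq 0$, and for that index the per-coordinate inequality is strict; a single strict term in a sum of nonstrict terms makes the total strict, so $\skal{\z'-\z,\vv} > \delta_c\|\vv\|^2$, which is exactly the claim.

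I do not expect a genuine obstacle here, since the analytic work—bounding the subgradient of $g$ by the tangent line $l(x)$ in each regime—has already been carried out in Lemmas~\ref{lemma:bnd1} and~\ref{lemma:bnd2}. The only subtlety is bookkeeping: one must notice that the two hypotheses exactly tile the complement of the forbidden interval and that their conclusions are matched to the sign of $v_i$, so that multiplication by $v_i$ produces a uniform quadratic lower bound regardless of sign. Keeping the strictness through the summation, by isolating one nonzero component, is the last point to state carefully.
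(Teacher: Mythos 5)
Your proposal is correct and follows essentially the same route as the paper's own proof: invoke Lemmas~\ref{lemma:bnd1} and~\ref{lemma:bnd2} coordinatewise, multiply by $v_i$ to get $(z_i'-z_i)v_i > \delta_c v_i^2$ for $v_i \neq 0$, and sum, with strictness coming from at least one nonzero component. Your write-up just makes explicit the sign-flip and strictness bookkeeping that the paper leaves implicit.
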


\begin{proof}
The is an immediate consequence of the previous two results.
We have according to Lemmas \ref{lemma:bnd1} and \ref{lemma:bnd2} that
\begin{equation}
(z'_i-z_i)v_i >  \delta_c v_i^2,
\end{equation}
for all $i$ with $v_i \neq 0$. Since $v_i=0$ gives $(z'_i-z_i)v_i=0$
summing over $i$ gives
\begin{equation}
\skal{\z'-\z,\vv} >  \delta_c \|\vv\|^2,
\end{equation}
as long as $\|\vv\| \neq 0$.
\end{proof}

We are now ready to consider the distribution of stationary points. Set $\z=(I-A^T A)\x_s + A^T \bb$ and recall that $2\z \in \partial g(\x_s)$ for stationary points $\x_s$ (Lemma \ref{lemma:lowrankstatpt}).

\begin{theorem}\label{thm:statpoint}
Assume that $\x_s$ is a stationary point of $f$ and that each element $z_i$ fulfills $|z_i| \notin [(1-\delta_c)\sqrt{\mu},\frac{\sqrt{\mu}}{1-\delta_c}]$.
If $\y_s$ is another stationary point of $f$ then $\supp(\y_s-\x_s) > c$.
\end{theorem}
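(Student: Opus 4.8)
The plan is to argue by contradiction, converting the existence of a nearby second stationary point into an inconsistency between the two growth estimates already in hand. So suppose $\y_s \neq \x_s$ is a second stationary point with $\supp(\y_s - \x_s) \le c$, and set $\vv = \y_s - \x_s$, so that $\vv \neq 0$ and $\supp(\vv) \le c$.

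First I would write down both stationarity conditions in the gradient form coming from \eqref{eq:vectorobj}: there are vectors $\z, \z'$ with $2\z \in \partial g(\x_s)$ and $2\z' \in \partial g(\x_s + \vv)$ such that $2\delta_c \x_s - \nabla h(\x_s) = 2\z$ and $2\delta_c(\x_s+\vv) - \nabla h(\x_s+\vv) = 2\z'$. By Lemma~\ref{lemma:lowrankstatpt} the first of these is precisely $\z = (I - A^TA)\x_s + A^T\bb$, so the interval hypothesis $|z_i| \notin [(1-\delta_c)\sqrt{\mu}, \tfrac{\sqrt{\mu}}{1-\delta_c}]$ in the statement applies to exactly this $\z$; confirming this alignment is the one bookkeeping point I would check carefully.

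Next I would subtract the two identities and pair the result with $\vv$, obtaining
\begin{equation}
2\skal{\z' - \z, \vv} = 2\delta_c \|\vv\|^2 - \skal{\nabla h(\x_s + \vv) - \nabla h(\x_s), \vv}.
\end{equation}
Now the two prepared estimates squeeze the two sides in opposite directions. On the right, since $\supp(\vv) \le c$, the RIP bound \eqref{eq:hbnd} gives $\skal{\nabla h(\x_s+\vv) - \nabla h(\x_s), \vv} \ge 0$, so the right-hand side is at most $2\delta_c\|\vv\|^2$. On the left, since the entries of $\z$ avoid the critical interval and $\vv \neq 0$, Lemma~\ref{lemma:subgradbnd} gives $\skal{\z' - \z, \vv} > \delta_c \|\vv\|^2$, so the left-hand side strictly exceeds $2\delta_c\|\vv\|^2$. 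These cannot both hold, and the contradiction forces $\supp(\y_s - \x_s) > c$.

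I do not expect a serious obstacle here, since all the real work is front-loaded into the lemmas. The RIP estimate makes $h$ behave like a convex function along $c$-sparse directions, and the content of Lemma~\ref{lemma:subgradbnd} is that the monotonicity of the set-valued gradient of $g$ is strict with a quantitative margin of exactly $\delta_c\|\vv\|^2$, precisely enough to overpower the non-convex perturbation $-\delta_c\|\x\|^2$ hidden in the splitting \eqref{eq:vectorobj}. The only thing I would double-check is the consistency of the $\pm$ sign conventions between the two sub-gradient lemmas and the signs of the components $v_i$, so that the termwise inequalities sum to the stated bound without loss.
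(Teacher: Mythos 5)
Your proof is correct and follows essentially the same route as the paper's own argument: subtract the two stationarity identities, pair with $\vv = \y_s - \x_s$, and play the RIP estimate \eqref{eq:hbnd} against the strict sub-gradient growth of Lemma~\ref{lemma:subgradbnd} to reach a contradiction. The bookkeeping point you flag---that the $\z$ in the stationarity condition is exactly $(I-A^TA)\x_s + A^T\bb$---is precisely what Lemma~\ref{lemma:lowrankstatpt} guarantees, as the paper notes just before stating the theorem.
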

\begin{proof}
Assume that $\supp(\y_s-\x_s) \leq c$.
We first note that
\begin{equation}
2\delta_c\x - \nabla h(\x) = 2(I -A^T A) \x + 2A^T \bb.
\end{equation}
Since $\x_s$ and $\y_s$ are both stationary points we have
$2\delta_c\x_s - \nabla h(\x_s) = 2\z$ and 
$2\delta_c\y_s - \nabla h(\y_s) = 2\z'$,
where $2\z \in \partial g(\x_s)$ and $2\z' \in \partial g(\y_s)$.
Taking the difference between the two equations gives
\begin{equation}
2\delta(\y_s-\x_s) - \left(\nabla h(\y_s)-\nabla h(\x_s)\right) = 2(\z'-\z),
\end{equation}
which implies
\begin{equation}
2\delta_c\|\vv\|^2 - \skal{\nabla h(\x + \vv)-\nabla h(\x), \vv} = 2\skal{\z'-\z,\vv},
\end{equation}
where $\vv = \y_s-\x_s$.
However, according to \eqref{eq:hbnd} the left hand side is less than $2\delta\|\vv\|^2$
if $\supp(\vv) \leq c$ which contradicts Lemma \ref{lemma:subgradbnd}.
\end{proof}
\paragraph{A one dimensional example.}
We conclude this section with a simple one dimensional example which shows that the bounds \eqref{eq:subgradbnd1} and \eqref{eq:subgradbnd2} cannot be made sharper.
Figure~\ref{fig:simpleex} shows the function $r_1(x)+(\frac{1}{\sqrt{2}}x-b)^2$ for different values of $b \geq 0$.
It is not difficult to verify that this function can have three stationary points (when $b \geq 0$).
The point $x=0$ is stationary if $b \leq \sqrt{2}$, $x=2-\sqrt{2}b$ if $\frac{1}{\sqrt{2}} < b <\sqrt{2}$ and $x=\sqrt{2}b$ if $b \geq \frac{1}{\sqrt{2}}$, see Figure~\ref{fig:simpleex}.
For this example $A=\frac{1}{\sqrt{2}}$ and therefore $(1-\delta)|x|^2 \leq |Ax|^2 \leq (1+\delta)|x|^2$ holds with $1-\delta = \frac{1}{2}$.

Now suppose that $b \leq \sqrt{2}$ and that we, using some algorithm, find the stationary point $x=0$. We then have
\begin{equation}
z = (1-A^T A)x+A^T b = \frac{1}{\sqrt{2}} b.
\end{equation}
Theorem 3.3 now tells us that $x=0$ is the unique stationary point if 
\begin{equation}
\frac{1}{\sqrt{2}}b \notin \left[1-\delta,\frac{1}{1-\delta} \right] \Leftrightarrow b \notin \left[\frac{1}{\sqrt{2}}, 2\sqrt{2} \right].
\end{equation}
Note that the lower interval bound $b < \frac{1}{\sqrt{2}}$ is precisely when $x=0$ is unique, see the leftmost graph in  Figure~\ref{fig:simpleex}.

Similarly suppose that $b\geq \frac{1}{\sqrt{2}}$. For the point $x=\sqrt{2}b$ we get
\begin{equation}
\frac{1}{2}z = (1-A^T A)x+A^T b = \frac{1}{2}\sqrt{2}b+\frac{1}{\sqrt{2}} b = \sqrt{2} b.
\end{equation}
Theorem 3.3 now shows that $x=\sqrt{2}b$ is unique if
\begin{equation}
\sqrt{2} b \notin \left[1-\delta,\frac{1}{1-\delta} \right] \Leftrightarrow b \notin \left[\frac{1}{2\sqrt{2}}, \sqrt{2} \right].
\end{equation}
Here the upper interval bound $b>\sqrt{2}$ is precisely when $x=\sqrt{2}b$ is unique, see rightmost graph in Figure~\ref{fig:simpleex}.
Hence for this example Theorem 3.3 is tight in the sense that it would be able to verify uniqueness of the stationary point for every $b$ where this holds.
\begin{figure*}
\begin{center}
\def\figwidths{31mm}
\begin{tabular}{ccccc}
\includegraphics[width=\figwidths]{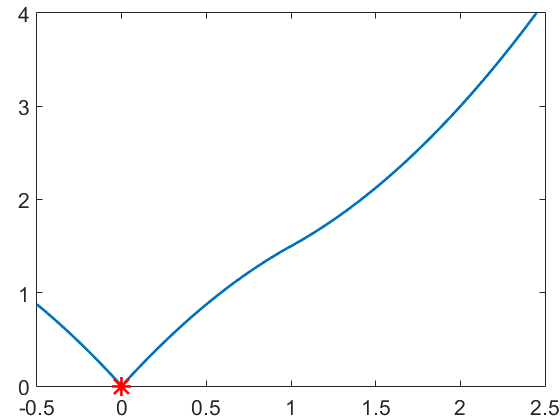}&
\includegraphics[width=\figwidths]{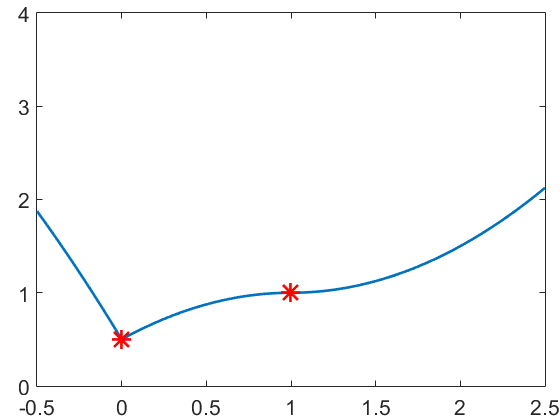}&
\includegraphics[width=\figwidths]{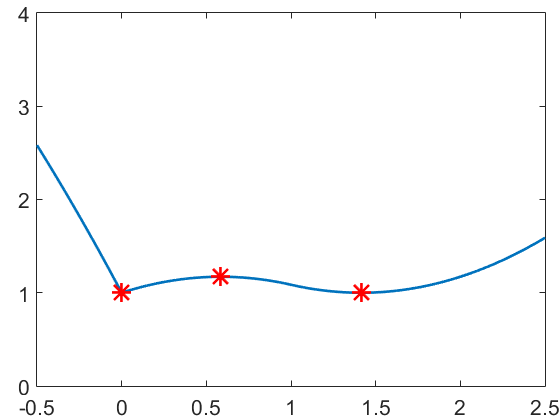}&
\includegraphics[width=\figwidths]{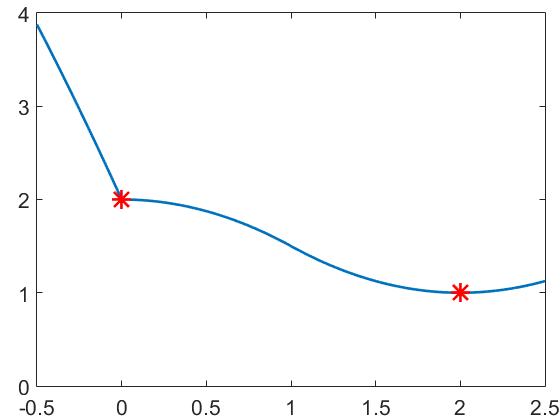}&
\includegraphics[width=\figwidths]{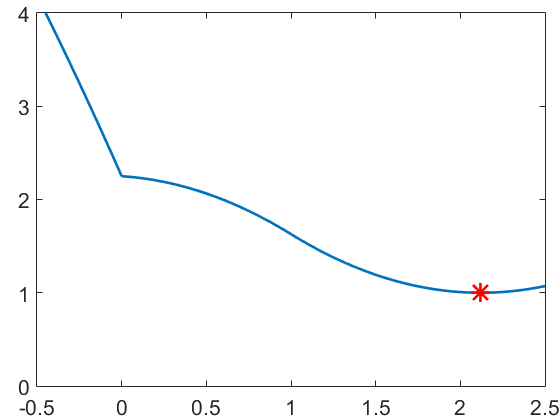}\\
$b = 0$ & $b = \frac{1}{\sqrt{2}}$ & $b=1$ & $b = \sqrt{2}$ & $b=1.5$
\end{tabular}
\end{center}
\caption{The function $r_1(x)+(\frac{1}{\sqrt{2}}x-b)^2$ and its stationary points (red) for different values of $b$. When $b$ is close to the threshold $1$ the function has multiple stationary points.}
\label{fig:simpleex}
\end{figure*}

\section{Low Rank Regularization}\label{sec:rank}
Next we generalize the vector formulation from the previous section to a matrix setting.
We let 
\begin{equation}
F(X) = r_\mu (\bfsigma(X)) + \|\A X - \bb\|_F^2, 
\end{equation}
and assume that $\A$ is a linear operator $\mathbb{R}^{m\times n} \rightarrow \mathbb{R}^k$ fulfilling \eqref{eq:matrisRIP} for all $X$ with $\rank(X) \leq r$. As in the vector case we can (ignoring constants) equivalently write
\begin{equation}
F(X) = G(X) - \delta_r\|X\|_F^2 + H(X),
\end{equation}
where 
$H(X) = \delta_r \|X\|_F^2 + \left(\|\A X\|^2-\|X\|_F^2\right)-2\skal{X, \A^* \bb}$
and 
$G(X) = g(\bfsigma(X))$,
with $g$ as in \eqref{eq:gdef}. 
The first estimate
\begin{equation}
\skal{\nabla H(X+V)- \nabla H(V), V} \geq 0
\label{eq:hbnd_rank}
\end{equation}
follows directly from \eqref{eq:matrisRIP} if $\rank(V) \leq r$. Our goal is now to show a matrix version of Lemma \ref{lemma:subgradbnd}. 

\begin{lemma}\label{UVlemma}
Let $\x$,$\x'$,$\z$,$\z'$ be fixed vectors with non-increasing and non-negative elements 
such that $\x \neq \x'$, $2\z \in \partial g(\x)$ and $2\z' \in \partial g(\x')$.
Define $X' = U' D_{\x'} V'^T$, $X = U D_\x V^T$, $Z' = U' D_{\z'} V'^T$, and  $Z = U D_\z V^T$ as functions of unknown orthogonal matrices $U$, $V$, $U'$ and $V'$. 
If
\begin{equation}
a^* = \min_{U,V,U',V'}\frac{\skal{Z'-Z,X'-X}}{\|X'-X\|_F^2} \leq 1
\label{eq:matrixfraction}
\end{equation}
then
\begin{equation}
a^* = \min _{\pi} \frac{\skal{M_{\pi,\e} \z' - \z,M_{\pi,\e} \x' - \x}}{\|M_{\pi,\e} \x'-\x\|^2},
\end{equation}
where $\e$ is a vector of all ones.
\end{lemma}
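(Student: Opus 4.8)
The plan is to show that both the numerator $\skal{Z'-Z,X'-X}$ and the denominator $\|X'-X\|_F^2$ depend on the four orthogonal matrices only through a single matrix $S = P\circ Q$ (the entrywise product), where $P = U^T U'$ and $Q = V^T V'$, and then to exploit that such $S$ always lie in the doubly sub-stochastic (DSS) set. First I would insert $X = UD_\x V^T$, $X' = U'D_{\x'}V'^T$, $Z = UD_\z V^T$, $Z' = U'D_{\z'}V'^T$ into each inner product. Using the cyclic property of the trace, every cross term becomes a trace of the form $\tr(D_a P D_b Q^T)$, and a direct index computation gives $\tr(D_a P D_b Q^T)=\sum_{i,k}a_i b_k P_{ik}Q_{ik}=a^T S b$. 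Hence $\skal{Z'-Z,X'-X}=N(S):=\skal{\z',\x'}+\skal{\z,\x}-\x^T S\z'-\z^T S\x'$ and $\|X'-X\|_F^2=D(S):=\|\x\|^2+\|\x'\|^2-2\x^T S\x'$, both \emph{affine} in $S$. This is the key structural simplification: the optimization over pairs of orthogonal matrices collapses to one over the matrices $S$.

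Next I would identify the feasible set. Since the rows and columns of $P$ and $Q$ are unit vectors, Cauchy--Schwarz gives $\sum_j|S_{ij}|=\sum_j|P_{ij}||Q_{ij}|\le 1$, and likewise for columns, so every $S=P\circ Q$ is DSS; conversely $P=Q=M_{\pi,\e}$ shows $M_{\pi,\e}\circ M_{\pi,\e}=M_{\pi,\e}$, so every unsigned permutation is attainable. Thus $\{M_{\pi,\e}\}\subseteq\{P\circ Q\}\subseteq\text{DSS}$. By Lemma~3.1 of \cite{andersson-etal-pams-2016} the DSS matrices form the convex hull of the signed permutations $M_{\pi,\vv}$ with $|v_i|=1$. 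Writing any DSS $S$ as $\sum_k\lambda_k M_{\pi_k,\vv_k}$ and using affineness of $N,D$, the mediant inequality $\frac{\sum_k\lambda_k N_k}{\sum_k\lambda_k D_k}\ge\min_k\frac{N_k}{D_k}$ shows that $\min_{\text{DSS}}N/D$ is attained at a signed permutation.

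It remains to remove the signs, which is where $a^*\le 1$ and the non-negativity enter. Suppose the DSS minimum is attained at $S_0=M_{\pi,\vv}$ and let $S_1=M_{\pi,\e}$ be the corresponding unsigned permutation. Flipping each $v_i=-1$ to $+1$ only increases the non-negative quantities $\x^T S\x'$, $\x^T S\z'$, $\z^T S\x'$, so $N(S_1)\le N(S_0)$ and $D(S_1)\le D(S_0)$. From $2\z\in\partial g(\x)$ and $2\z'\in\partial g(\x')$ together with \eqref{eq:vectorsubgrad} one reads off $z_i\ge x_i\ge 0$ and $z_i'\ge x_i'\ge 0$, whence $x_i z'_{\pi(i)}+z_i x'_{\pi(i)}\ge 2x_i x'_{\pi(i)}$ termwise; summing over the flipped indices gives $N(S_0)-N(S_1)\ge D(S_0)-D(S_1)\ge 0$. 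Since $2Z\in\partial G(X)$ and $2Z'\in\partial G(X')$, monotonicity of the subdifferential of the convex $G$ forces $N(S_0)\ge 0$, and $N(S_0)/D(S_0)\le a^*\le 1$ gives $0\le N(S_0)\le D(S_0)$. A short cross-multiplication then yields $N(S_1)/D(S_1)\le N(S_0)/D(S_0)$, so the DSS minimum is actually attained at the unsigned permutation $M_{\pi,\e}$.

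Finally I would close the chain of inequalities. Because $\{M_{\pi,\e}\}\subseteq\{P\circ Q\}\subseteq\text{DSS}$, we have $\min_{\text{DSS}}N/D\le a^*\le\min_\pi N(M_{\pi,\e})/D(M_{\pi,\e})$, and the previous paragraph shows the left-hand minimum is itself attained at some $M_{\pi,\e}$, so all three quantities coincide. A direct check using orthogonality of the permutation matrix gives $N(M_{\pi,\e})=\skal{M_{\pi,\e}\z'-\z,\,M_{\pi,\e}\x'-\x}$ and $D(M_{\pi,\e})=\|M_{\pi,\e}\x'-\x\|^2$, which is exactly the claimed identity. The main obstacle I anticipate is the bookkeeping around positivity of the denominators: both the mediant step and the final ratio comparison require $D>0$ at the relevant extreme points, so I would first argue that any minimizer has positive denominator (the ratio is $\le 1<\infty$ while the numerator is non-negative) and dispatch the degenerate case $M_{\pi,\e}\x'=\x$ separately or by a limiting argument.
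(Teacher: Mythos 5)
Your proof is correct, and its skeleton coincides with the paper's: both collapse the dependence on the orthogonal matrices into a single Hadamard--product matrix (your $S=(U^TU')\circ(V^TV')$ is the paper's $M=U'\odot V'_{1,1}$ after the normalization $U=I$, $V=I$; for $m\neq n$ both need the upper-left $\min(m,n)\times\min(m,n)$ blocks), both relax to the DSS set and invoke Lemma~3.1 of the same reference for its extreme points $M_{\pi,\vv}$, and both use non-negativity to discard the signs and close with the sandwich $\{M_{\pi,\e}\}\subseteq\{\text{realizable }S\}\subseteq\text{DSS}$. The genuine difference is how the fractional objective is processed. The paper linearizes at the optimal value: $(U',V')$ minimizes $N/D$ iff $N-a^*D\le 0$, and $N-a^*D$ equals, up to a constant, the linear functional $-\skal{Z'-a^*X',X}-\skal{Z-a^*X,X'}$ of $M$; vertex optimality is then plain linear programming, and $a^*\le 1$ makes the coefficient vectors $\x$, $\x'$, $\z-a^*\x$, $\z'-a^*\x'$ non-negative, so an unsigned permutation is optimal with no further work. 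You instead minimize the ratio directly over DSS, which costs two extra tools: the mediant inequality (quasi-linearity of a linear-fractional function) for vertex attainment, and monotonicity of $\partial G$ plus a cross-multiplication to remove the signs. Both routes are valid; the paper's linearization is shorter and makes the role of $a^*\le 1$ purely a positivity statement, while your version keeps the ratio intact and shows explicitly where the subgradient inequalities $z_i\ge x_i\ge 0$ enter.

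The one issue you flag --- positivity of the denominators --- is real but dissolves more cleanly than you anticipate, with no limiting argument or case split: at any signed permutation, $D(M_{\pi,\vv})=\|\x\|^2+\|\x'\|^2-2\sum_i v_i x_i x'_{\pi(i)}\ge\|\x\|^2+\|\x'\|^2-2\sum_i x_i x'_i=\|\x-\x'\|^2>0$, using $|v_i|\le 1$, non-negativity, the rearrangement inequality (both vectors are sorted), and $\x\neq\x'$; since $D$ is affine, writing any DSS matrix as a convex combination of the $M_{\pi,\vv}$ gives $D(S)\ge\|\x-\x'\|^2>0$ on all of DSS. With that inserted, your argument is complete.
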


\begin{proof}
We may assume that $U = I_{m\times m}$ and $V=I_{n\times n}$.
We first note that $(U',V')$ is a minimizer of \eqref{eq:matrixfraction} if and only if
\begin{equation}
\skal{Z'-Z,X'-X} \leq a^* \|X'-X\|_F^2.
\end{equation}
This constraint can equivalently be written
\begin{equation}
C - \skal{Z'-a^*X',X} - \skal{Z-a^* X, X'} \leq 0,
\end{equation}
where $C=\skal{Z',X'}+\skal{Z,X}-a^*(\|X'\|_F^2+\|X\|_F^2)$ is independent of $U'$ and $V'$.
Thus any minimizer of \eqref{eq:matrixfraction} must also maximize 
\begin{equation}
\skal{U'D_{\z'-a^*\x'}V'^T,D_\x} + \skal{D_{\z-a^* \x}, U'D_{\x'} V'^T}.
\label{eq:diagmatrices}
\end{equation}
For ease of notation we now assume that $m \leq n$, that is, 
the number of rows are less than the columns (the opposite case can be handled by transposing).
Equation \eqref{eq:diagmatrices} can now be written
\begin{equation}
\x^T M(\z'-a^*\x')+ (\z-a^*\x)^T M\x',
\label{eq:linearmatrixobjective}
\end{equation}
where $M = U' \odot V'_{1,1}$, $V'_{1,1}$ is the upper left $m\times m$ block of $V'$ and $\odot$ denotes element wise multiplication.
Since both $U'$ and $V'$ are orthogonal it is easily shown (using the Cauchy-Schwartz inequality) that $M$ is DSS.

Note that objective \eqref{eq:linearmatrixobjective} is linear in $M$ and therefore optimization over the set of DSS 
matrices is guaranteed to have an extreme point $M_{\pi,\vv}$ that is optimal. 
Furthermore, since $a^* \leq 1$ the vectors $\x$,$\x'$,$\z-a^*\x$ and $\z'-a^*\x'$ all have positive entries, 
and therefore the maximizing matrix has to be $M_{\pi,\e}$ for some permutation $\pi$.
Since $M_{\pi,\e}$ is orthogonal and $ M_{\pi,\e}= M_{\pi,\e}\odot M_{\pi,\e}$,
$U' = M_{\pi,\e}$ and $V'_{1,1} =  M_{\pi,\e}$ will be optimal when maximizing \eqref{eq:linearmatrixobjective} over $U'$ and $V'_{1,1}$.
An optimal $V'$ in \eqref{eq:diagmatrices} can now be chosen to be 
$V' = \begin{bmatrix}
M_{\pi,\e} & 0 \\
0 & I
\end{bmatrix}$.
Note this choice is somewhat arbitrary since only the upper left block of $V'$ affects the value of \eqref{eq:diagmatrices}.
The matrices $U' Z' V'^T$ and $U' X' V'^T$ are now diagonal, with diagonal elements $M_{\pi,\e}\z'$ and $M_{\pi,\e}\x'$, which concludes the proof.
\end{proof}

\begin{corollary}
Assume that $2Z \in \partial G(X)$.
If the singular values of the matrix $Z$ fulfill $z_i \notin [(1- \delta_r)\sqrt{\mu}, \frac{\sqrt{\mu}}{1- \delta_r}]$,
 then for any $2Z' \in \partial G(X')$ we have 
\begin{equation}
\skal{Z'-Z,X'-X} >  \delta_r \|X'-X\|_F^2,
\end{equation}
as long as $\|X'-X\|_F \neq 0$.
\end{corollary}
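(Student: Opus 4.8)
The plan is to deduce the statement from Lemma~\ref{UVlemma} combined with the scalar estimate of Lemma~\ref{lemma:subgradbnd}. First I would fix SVDs $X = U D_\x V^T$ and $X' = U' D_{\x'} V'^T$ with $\x = \bfsigma(X)$ and $\x' = \bfsigma(X')$ non-increasing, and choose halved subgradients $\z$, $\z'$ with $2\z \in \partial g(\x)$ and $2\z' \in \partial g(\x')$ so that $Z = U D_\z V^T$ and $Z' = U' D_{\z'} V'^T$. Since a zero singular value of $X$ leaves its singular vectors free, I can flip signs and permute inside the zero blocks so that $\z$, and likewise $\z'$, is non-negative and non-increasing without altering $X$ or $Z$; this is exactly the normalisation required by Lemma~\ref{UVlemma}. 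The entries of $\z$ are then precisely the singular values of $Z$, so the hypothesis $z_i \notin [(1-\delta_r)\sqrt{\mu}, \frac{\sqrt{\mu}}{1-\delta_r}]$ is the elementwise assumption of Lemma~\ref{lemma:subgradbnd}, with $\delta_c$ replaced by $\delta_r$.

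Next I would reduce the inequality to a lower bound on the orientation-minimised ratio $a^*$ of \eqref{eq:matrixfraction}. The given matrices are one admissible choice of orthogonal factors, so
\begin{equation*}
\frac{\skal{Z'-Z,\, X'-X}}{\|X'-X\|_F^2} \geq a^*,
\end{equation*}
and it suffices to show $a^* > \delta_r$. If $a^* > 1$ this is immediate, since the RIP forces $\delta_r < 1$. Otherwise $a^* \leq 1$ and Lemma~\ref{UVlemma} gives
\begin{equation*}
a^* = \min_{\pi} \frac{\skal{M_{\pi,\e}\z' - \z,\, M_{\pi,\e}\x' - \x}}{\|M_{\pi,\e}\x' - \x\|^2}.
\end{equation*}
Because $\e$ is the all-ones vector, each $M_{\pi,\e}$ is a genuine permutation matrix, so permuting a subgradient of $g$ again produces a subgradient of the permuted argument: writing $\vv = M_{\pi,\e}\x' - \x$ we have $M_{\pi,\e}\x' = \x + \vv$ and $2 M_{\pi,\e}\z' \in \partial g(\x + \vv)$. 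Each $\pi$-term is therefore the scalar configuration of Lemma~\ref{lemma:subgradbnd}, which yields
\begin{equation*}
\skal{M_{\pi,\e}\z' - \z,\, M_{\pi,\e}\x' - \x} > \delta_r \|M_{\pi,\e}\x' - \x\|^2
\end{equation*}
whenever $\vv \neq 0$. Minimising over the finitely many permutations then gives $a^* > \delta_r$.

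For this last step I must know that $\vv \neq 0$ for every $\pi$. When $\x \neq \x'$ this is automatic: both vectors are non-increasing, so a permutation of $\x'$ can equal $\x$ only if they have the same multiset of entries, which forces $\x = \x'$. The genuine obstacle is the degenerate case $\x = \x'$ (equal singular-value spectra but $X \neq X'$), which is exactly what the hypothesis $\x \neq \x'$ of Lemma~\ref{UVlemma} rules out. I expect to treat it directly: after rotating so that $X = D_\x$ and $Z = D_\z$, one expands $\skal{Z'-Z,\, X'-X}$ and estimates the cross terms using von Neumann's trace inequality $\skal{X,X'} \leq \skal{\x,\x'}$ together with the dichotomy forced by the gap condition, namely that every $z_i$ is either larger than $\frac{\sqrt{\mu}}{1-\delta_r}$ (in which case $x_i = z_i$) or smaller than $(1-\delta_r)\sqrt{\mu}$ (in which case $x_i = 0$), never equal to the intermediate value $\sqrt{\mu}$. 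A short computation of this type shows the ratio stays strictly above $\delta_r$. Establishing strictness in this equal-spectrum case, rather than the clean permutation reduction, is where I anticipate the real work; the accompanying sign and ordering bookkeeping is routine.
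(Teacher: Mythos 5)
Your treatment of the generic case $\bfsigma(X)\neq\bfsigma(X')$ is correct and is essentially the paper's own argument: normalize the singular vector/subgradient pairs so that the entries of $\z$ are the singular values of $Z$, bound the ratio from below by $a^*$ of \eqref{eq:matrixfraction}, dispose of $a^*>1$ using $\delta_r<1$, and otherwise invoke Lemma~\ref{UVlemma} to reduce to the permutations $M_{\pi,\e}$, to which Lemma~\ref{lemma:subgradbnd} applies because $M_{\pi,\e}\x'\neq\x$ whenever the sorted spectra differ. (One cosmetic difference: the paper runs this argument with $\delta_r$ replaced by $\delta_r+\epsilon$, where $\epsilon>0$ is chosen so that the $z_i$ still avoid the enlarged interval $[(1-\delta_r-\epsilon)\sqrt{\mu},\frac{\sqrt{\mu}}{1-\delta_r-\epsilon}]$; this is not needed in the generic case itself, but it is what makes the paper's treatment of the remaining case work.)

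The genuine gap is the degenerate case $\bfsigma(X)=\bfsigma(X')$ with $X\neq X'$, which you correctly flag as outside the scope of Lemma~\ref{UVlemma} (there the ratio \eqref{eq:matrixfraction} is not even well defined, since the denominator can vanish), but which your sketch does not resolve. Writing $Z=X+N$ and $Z'=X'+N'$, where $N,N'$ live on the zero singular spaces (this is what the dichotomy gives, since every nonzero $x_i$ satisfies $x_i=z_i>\frac{\sqrt{\mu}}{1-\delta_r}$), one gets $\skal{Z'-Z,X'-X}=\|X'-X\|_F^2-\skal{N',X}-\skal{N,X'}$, so what must be shown is $\skal{N',X}+\skal{N,X'}<(1-\delta_r)\|X'-X\|_F^2$. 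Both sides of this inequality vanish to \emph{second} order in the angles between the singular subspaces of $X$ and $X'$ (take $X'$ a slight rotation of $X$ to see this), so a first-order inequality like von Neumann's trace bound cannot deliver it; your ``short computation'' is in fact the crux, and it is not routine. The paper closes this case differently, by continuity: since the complement of the forbidden interval is open and $\delta_r<1$, enlarge the interval by $\epsilon$; observe $\sigma_1(X)>0$ (else $X=X'=0$), hence $z_1=\sigma_1(X)>\sqrt{\mu}$, and perturb $\bar{X}(t),\bar{Z}(t)$ by adding $t>0$ to the top singular value of each, which preserves $2\bar{Z}(t)\in\partial G(\bar{X}(t))$ and the enlarged gap condition while making $\bfsigma(\bar{X}(t))\neq\bfsigma(X')$; the generic case then gives the strict bound with $\delta_r+\epsilon$ for every $t>0$, and letting $t\to 0^+$ yields $\geq\delta_r+\epsilon>\delta_r$. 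Note that the $\epsilon$-enlargement is precisely what lets strictness survive the limit; any continuity-based repair of your proof will need the same device.
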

\begin{proof}
We will first prove the result under the assumption that $\bfsigma(X) \neq \bfsigma(X')$ and then generalize to the general case using a continuity argument. 
For this purpose we need to extend the infeasible interval somewhat.
Since $\delta_r < 1$ and the complement of $[(1- \delta_r)\sqrt{\mu}, \frac{\sqrt{\mu}}{1- \delta_r}]$ is open there is an 
$\epsilon > 0$ such that $z_i \notin [(1- \delta_r-\epsilon)\sqrt{\mu}, \frac{\sqrt{\mu}}{1- \delta_r-\epsilon}]$ and $\delta_r+\epsilon < 1$.
Now assume that $a^* > 1$ in \eqref{eq:matrixfraction}, then clearly
\begin{equation}
\skal{Z'-Z,X'-X} 
> (\delta_r+\epsilon) \|X'-X\|_F^2,
\label{eq:epsilonbound}
\end{equation}
since $\delta_r+\epsilon < 1$. 
Otherwise $a^* \leq 1$ and we have
\begin{equation}
\frac{\skal{Z'-Z,X'-X}}{\|X'-X\|_F^2} \geq  \frac{\skal{M_{\pi,\e} \z' - \z,M_{\pi,\e} \bfsigma(X') - \bfsigma(X)}}{\|M_{\pi,\e} \bfsigma(X') - \bfsigma(X)\|^2}.
\end{equation}
According to Lemma \ref{lemma:subgradbnd} the right hand side is strictly larger than $\delta_r+\epsilon$, 
which proves that \eqref{eq:epsilonbound} holds for all $X'$ with $\bfsigma(X')\neq \bfsigma(X)$.

For the case $\bfsigma(X') = \bfsigma(X)$ and $\|X'-X\|_F \neq 0$ we will now prove that
\begin{equation}
\skal{Z'-Z,X'-X}
\geq (\delta_r+\epsilon) \|X'-X\|_F^2,
\label{eq:epsilonbound2}
\end{equation}
using continuity of the scalar product and the Frobenius norm. 
Since $\epsilon > 0$ this will prove the result.

We must have $\sigma_1(X)>0$ since otherwise $X$ = $X'$ = 0 and therefore $\|X'-X\|_F = 0$.
By the definition of the sub differential we therefore know that $z_1 \geq \sqrt{\mu}$ and by 
the assumptions of the lemma we have that $z_1 > \frac{\sqrt{\mu}}{1- \delta_r-\epsilon}$.

If $X=UD_{\bfsigma(X)}V^T$ we now define $\bar{X}(t) = UD_{\bfsigma(\bar{X}(t))}V^T$, where
\begin{equation}
\sigma_i(\bar{X}(t)) = 
\begin{cases}
\sigma_1(X)+t & \text{if } i=1 \\ 
\sigma_i(X) & \text{otherwise}
\end{cases}.
\end{equation}
Similarly we define $\bar{Z}(t) = UD_{\bar{\z}(t)}V^T$, where
\begin{equation}
\bar{z}_i(t) = 
\begin{cases}
z_1+t & \text{if } i=1 \\ 
z_i & \text{otherwise}
\end{cases}.
\end{equation}
It is now clear that $2\bar{Z}(t) \in \partial G(\bar{X}(t))$ and $\bar{z}_i(t) \notin [(1- \delta_r-\epsilon)\sqrt{\mu}, \frac{\sqrt{\mu}}{1- \delta_r-\epsilon}]$,
for all $t \geq 0$. Further more $\bar{Z}(t) \rightarrow \bar{Z}(0) = Z$ and $\bar{X}(t) \rightarrow \bar{X}(0) = X$ when $t \rightarrow 0^+$.
Since $\bfsigma(\bar{X}(t))\neq \bfsigma(X')$ for $t > 0$ we have by \eqref{eq:epsilonbound2} that
\begin{equation}
\skal{Z'-\bar{Z}(t),X'-\bar{X}(t)} > (\delta_r+\epsilon) \|X'-\bar{X}(t)\|_F^2,
\end{equation}
for all $t > 0$.
By continuity of the Frobenius norm and the scalar product we can now conclude that \eqref{eq:epsilonbound2} holds.
\end{proof}

\begin{theorem}
Assume that $X_s$ is a stationary point of $F$, that is, $(I-\A^*\A)X_s + \A^* \bb = Z$,  where $2Z \in \partial G(X)$ and the singular values of $Z$ fulfill $\sigma_i(Z) \notin [(1- \delta_r)\sqrt{\mu}, \frac{\sqrt{\mu}}{1- \delta_r}]$. 
If $X'_s$ is another stationary point then $\text{rank}(X'_s-X_s) > r$.
\end{theorem}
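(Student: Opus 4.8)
The plan is to mirror the argument of Theorem~\ref{thm:statpoint} in the matrix setting, using the Corollary just established in place of Lemma~\ref{lemma:subgradbnd} and the estimate \eqref{eq:hbnd_rank} in place of \eqref{eq:hbnd}. I would argue by contradiction: assume $\rank(X'_s-X_s) \leq r$ and derive an incompatibility between the growth of $H$ on low-rank perturbations and the growth of the sub-gradients of $G$.

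First I would rewrite the stationarity condition in the form adapted to the decomposition $F(X) = G(X) - \delta_r\|X\|_F^2 + H(X)$. Differentiating the smooth part shows that $X_s$ is stationary exactly when $2\delta_r X_s - \nabla H(X_s) \in \partial G(X_s)$, and a direct computation gives the identity $2\delta_r X - \nabla H(X) = 2(I-\A^*\A)X + 2\A^*\bb$. Hence stationarity of $X_s$ reads $2Z \in \partial G(X_s)$ with $Z = (I-\A^*\A)X_s + \A^*\bb$, matching the hypothesis, and the same identity applied to the second stationary point yields $2\delta_r X'_s - \nabla H(X'_s) = 2Z'$ for some $2Z' \in \partial G(X'_s)$.

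Next I would subtract the two stationarity equations and pair the difference with $V = X'_s - X_s$, which produces
\[
2\delta_r\|V\|_F^2 - \skal{\nabla H(X_s+V) - \nabla H(X_s), V} = 2\skal{Z'-Z, V}.
\]
Under the contradiction hypothesis $\rank(V) \leq r$, the estimate \eqref{eq:hbnd_rank} forces $\skal{\nabla H(X_s+V) - \nabla H(X_s), V} \geq 0$, so the left-hand side is at most $2\delta_r\|V\|_F^2$. On the other hand, since the singular values of $Z$ avoid the interval $[(1-\delta_r)\sqrt{\mu}, \frac{\sqrt{\mu}}{1-\delta_r}]$, the Corollary gives $\skal{Z'-Z, V} > \delta_r\|V\|_F^2$ whenever $\|V\|_F \neq 0$, so the right-hand side strictly exceeds $2\delta_r\|V\|_F^2$. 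These two bounds are incompatible, which is the contradiction; therefore $\rank(X'_s-X_s) > r$.

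I expect essentially no difficulty in this final step itself, since it is a verbatim analogue of the vector proof: the genuine work has already been absorbed into the Corollary and, beneath it, into Lemma~\ref{UVlemma}, which reduces the matrix sub-gradient estimate to the permutation form of Lemma~\ref{lemma:subgradbnd} via doubly sub-stochastic matrices. The only points requiring care are verifying that the algebraic identity $2\delta_r X - \nabla H(X) = 2(I-\A^*\A)X + 2\A^*\bb$ is exact, and confirming that the rank constraint required by \eqref{eq:hbnd_rank} is precisely the negation of the conclusion, so that assuming $\rank(V)\le r$ is exactly what licenses the $H$-estimate.
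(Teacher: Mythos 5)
Your proposal is correct and is exactly the argument the paper has in mind: the paper omits this proof precisely because it is the verbatim matrix analogue of Theorem~\ref{thm:statpoint}, substituting the Corollary for Lemma~\ref{lemma:subgradbnd} and \eqref{eq:hbnd_rank} for \eqref{eq:hbnd}, which is what you do. Your two flagged checkpoints (the identity $2\delta_r X - \nabla H(X) = 2(I-\A^*\A)X + 2\A^*\bb$ and the fact that $\rank(V)\le r$ is what licenses the $H$-estimate) are both verified correctly.
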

The proof is similar to that of Theorem \ref{thm:statpoint} and therefore we omit it.

\section{Experiments}

In this section we evaluate the proposed methods on a few synthetic experiments.
For low rank recovery we compare the two formulations 
\begin{eqnarray}
\mu' \|X\|_* + \|\A X -\bb\|^2 \label{eq:nuclear} \\
r_{\mu}(\bfsigma(X)) + \|\A X -\bb\|^2\label{eq:our}
\end{eqnarray}
for low rank recovery for varying regularization strengths $\mu$ and $\mu'$. 
Note that the proximal operator of the nuclear norm, $\argmin_X \mu' \|X\|_*+\|X-W\|^2$, performs soft thresholding at $\frac{\mu'}{2}$ while that of $r_\mu$,
$\argmin_X \mu r_\mu(\bfsigma(X))+\|X-W\|^2$, thresholds at $\sqrt{\mu}$ \cite{larsson-olsson-ijcv-2016}.
In order for the methods to roughly suppress an equal amount of noise we therefore use $\mu' = 2 \sqrt{\mu}$ in \eqref{eq:nuclear}.

For sparse recovery we compare the two formulations 
\begin{eqnarray}
\mu' \|\x\|_1 + \|A \x -\bb\|^2. \label{eq:l1} \\
r_{\mu}(\x) + \|A \x -\bb\|^2\label{eq:ourvector}
\end{eqnarray}
for varying regularization strengths $\mu$ and $\mu'$. 
Similarly to the rank case, the proximal operator of the $\ell_1$-norm, $\argmin_\x \mu' \|\x \|_1+\|\x-\z\|^2$, performs soft thresholding at $\frac{\mu'}{2}$ while that of $r_\mu$,
$\argmin_\x \mu r_\mu(x)+\|\x-\z \|^2$, thresholds at $\sqrt{\mu}$ \cite{larsson-olsson-ijcv-2016}.
We therefore use $\mu' = 2 \sqrt{\mu}$ in \eqref{eq:l1}.

\subsection{Optimization Method}
Because of its simplicity we use the GIST approach from \cite{gong-etal-2013}.
Given a current iterate $X_k$ this method uses a trust region formulation that approximates the data term $\|\A X - \bb\|^2$ with the linear function $2\skal{ \A^* \A X_k - \A^* \bb, X }$.
In each step the algorithm therefore finds $X_{k+1}$ by solving
\begin{equation}
\min_X r_\mu(\bfsigma(X))+2\skal{ \A^* \A X_k - \A^* \bb, X }  + \tau_k \|X-X_k\|^2.
\end{equation}
Here the third term $\tau_k \|X-X_k\|^2$ restricts the search to a neighborhood around $X_k$.
Completing squares shows that the above problem is equivalent to
\begin{equation}
\min_X r_\mu(\bfsigma(X)) + \tau_k \left\|X- M \right\|^2,
\label{eq:tauapprox}
\end{equation}
where $M =X_k -\frac{1}{\tau_k}(\A^* \A X_k- A^* \bb) $.
Note that if $\tau_k=1$ then any fixed point of \eqref{eq:tauapprox} is a stationary point by Lemma \ref{lemma:lowrankstatpt}.
The optimization of \eqref{eq:tauapprox} will be separable in the singular values of $X$.
For each $i$ we minimize $-\max(\sqrt{\mu}-\sigma_i(X),0)^2+\tau_k(\sigma_i(X)-\sigma_i(M))^2$.
Since singular values are always positive there are three possible minimizers:
 $\sigma_i(X) =\sigma_i(M)$, $\sigma_i(X) = 0$ and $\sigma_i(X) = \frac{\tau_k\sigma_i(M)-\sqrt{\mu}}{\tau_k-1}$. 
In our implementation we simply test which one of these yields the smallest objective value.
(If $\tau_k=1$ it is enough to test $\sigma_i(X) =\sigma_i(M)$ and $\sigma_i(X) = 0$.)
For initialization we use $X_0=0$.

In summary our algorithm consists of repeatedly solving \eqref{eq:tauapprox} for a sequence of $\{\tau_k\}$. In the experiments we noted that using $\tau_k = 1$ for all $k$ sometimes resulted
in divergence of the method due to large step sizes.
We therefore start from a larger value ($\tau_0 = 5$ in our implementation) and reduce towards $1$ as long as this results in decreasing objective values. Specifically we set $\tau_{k+1} = \frac{\tau_k-1}{1.1} + 1$ if the previous step was successful in reducing the objective value. Otherwise we increase $\tau$ according to $\tau_{k+1} = 1.5(\tau_k-1) + 1$.

The sparsity version of the algorithm is almost identical to the one described above. In each step we find $\x_{k+1}$ by minimizing
\begin{equation}
r_\mu(\x) + \tau_k \left\|\x- \underbrace{\left(\x_k -\frac{1}{\tau_k}(A^T A \x_k- A^T \bb)  \right)}_{:= m} \right\|^2.
\label{eq:tauapprox}
\end{equation}
The optimization is separable and for each element $x_i$ we minimize $-\max(\sqrt{\mu}-|x_i|,0)^2+\tau_k(x_i-m_i)^2$.
It is easy to show that there are four possible choices $x_i = m_i$, $x_i = \frac{\tau_k m_i\pm\sqrt{\mu}}{\tau_k-1}$ and $x_i=0$ that can be optimal. 
In our implementation we simply test which one of these yields the smallest objective value.
(If $\tau_k=1$ it is enough to test $x_i=m_i$ and $x_i=0$.)
For initialization we use $\x_0=0$.

\begin{figure*}
\def\w{45mm}
\begin{center}
\includegraphics[width=\w]{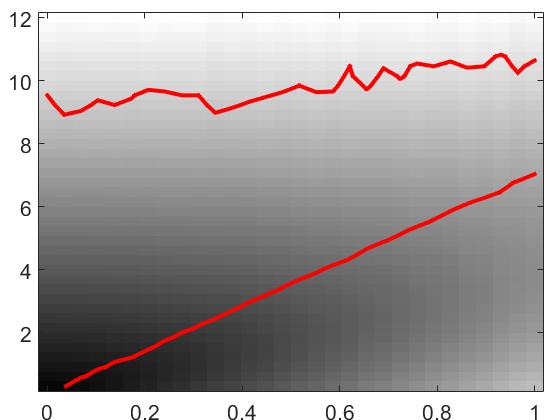} 
\includegraphics[width=\w]{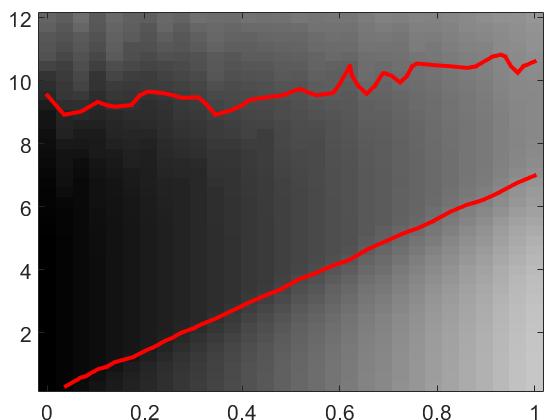} 
\includegraphics[width=\w]{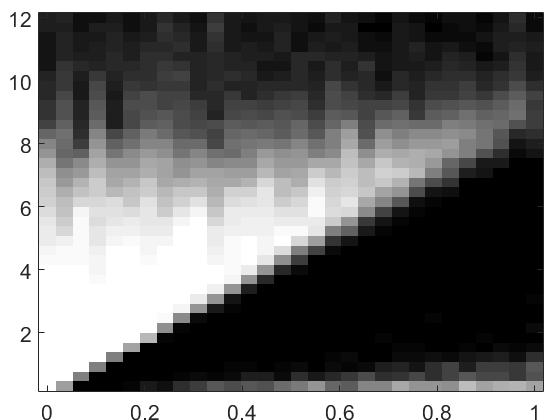}\\
\end{center}
\caption{Low rank recovery results for varying noise level (x-axis) and regularization strength (y-axis) with random $400 \times 400$ $\A$ with $\delta=0.2$.
\emph{Left}: Average distances between \eqref{eq:nuclear} and the ground truth for $\mu$ between $0$ and $12$.
(red curves marks the area where the obtained solution has $\rank(X)=5$).
\emph{Middle}: Average distances between \eqref{eq:our} the ground truth.
\emph{Right}: Number of instances where \eqref{eq:our} could be verified to be optimal for $\delta=0.2$ (white = all, black = none).}
\label{fig:rank_result}
\end{figure*}

\subsection{Low Rank Recovery}
\begin{figure}
\begin{center}
\def\w{41mm}
\includegraphics[width=\w]{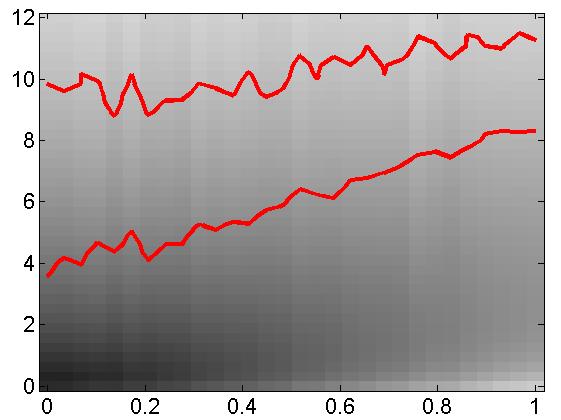} 
\includegraphics[width=\w]{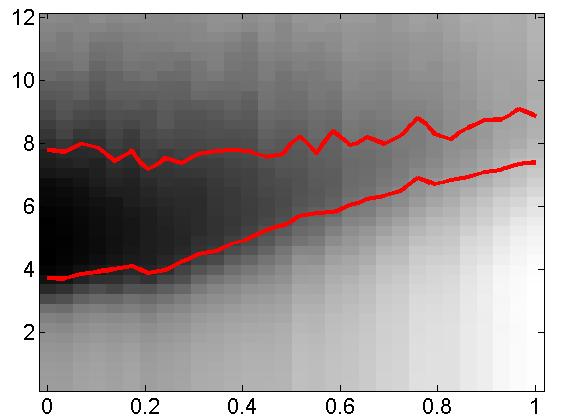} 
\end{center}
\caption{Low rank recovery results varying noise level (x-axis) and regularization strength (y-axis) with random $300 \times 400$ $\A$ (and unknown $\delta$).
\emph{Left}: Average distances between \eqref{eq:nuclear} and the ground truth.
(red curves marks the area where the obtained solution has $\rank(X)=5$).
\emph{Middle}: Average distances between \eqref{eq:our} the ground truth.}
\label{fig:lowrankunknowndelta}
\end{figure}
In this section we test the proposed method on synthetic data.
We generate $20 \times 20$ ground truth matrices of rank $5$ by randomly selecting $20\times 5$ matrices $U$ and $V$ with $\mathcal{N}(0,1)$ elements and multiplying $X=UV^T$. By column stacking matrices the linear mapping $\A : \mathbb{R}^{m\times n} \rightarrow \mathbb{R}^p$ can be represented by a $p \times mn$ matrix $\hat{A}$.
For a given rank $r < \min(m,n)$ it is a difficult problem to determine the exact $\delta_r$ for which \eqref{eq:matrisRIP} holds. However if we consider unrestricted solutions ($r = \min(m,n)$)  \eqref{eq:matrisRIP} reduces to a singular value bound.
It is easy to see that if $p \geq mn$ and $\sqrt{1-\delta_r} \leq \sigma_i(\hat{A}) \leq \sqrt{1-\delta_r}$ for all $i$ then \eqref{eq:matrisRIP} clearly holds for all $X$.
For under-determined systems finding the value of $\delta_r$ is much more difficult. However for a number of random matrix families it can be proven that \eqref{eq:matrisRIP} will hold with high probability when the matrix size tends to infinity. For example \cite{candes-tao-2006,recht-etal-siam-2010} mentions random matrices with Gaussian entries, Fourier ensembles, random projections and matrices with Bernoulli distributed elements.

For Figure~\ref{fig:rank_result} we randomly generated problem instances for low rank recovery. Each instance uses a matrix $\hat{A}$ of size $20^2 \times 20^2$ with $\delta = 0.2$ which was generated by first randomly sampling the elements of a matrix $\tilde{A}$ a Gaussian $\mathcal{N}(0,1)$ distribution. The matrix $\hat{A}$ was then constructed from $\tilde{A}$ by modifying the singular values to be evenly distributed between $\sqrt{1-\delta}$ and $\sqrt{1+\delta}$.
To generate a ground truth solution and a $\bb$ vector we then computed $\bb=\A X+\epsilon$, where all elements of $\epsilon$ are $\mathcal{N}(0,\sigma^2)$.
We then solved \eqref{eq:nuclear} and \eqref{eq:our} for varying noise level $\sigma$ and regularization strength $\mu$
and computed the distance between the obtained solution and the ground truth.

The averaged results (over 50 random instances for each $(\sigma,\mu)$ setting) are shown in
Figure~\ref{fig:rank_result}.
(Here black means low and white means a high errors. Note that the color-maps of left and middle image are the same. The red curves show the area where the computed solution has the correct rank.)
From Figure~\ref{fig:rank_result} it is quite clear that the nuclear norm suffers from a shrinking bias.
It consistently gives the best agreement with the ground truth data for values of $\mu$ that are not big enough to generate low rank. The effect becomes more visible as the noise level increases since a larger $\mu$ is required to suppress the noise.
In contrast, \eqref{eq:rankrelax} gives the best fit at the correct rank for all noise levels. This fit was consistently better than that of \eqref{eq:nuclear} for all noise levels.
To the right in Figure~\ref{fig:rank_result} we show the fraction of problem instances that could be verified to be optimal 
(by computing $Z = (I - \A^* \A)X_s + \A^* \bb$ and checking that $\sigma_i(Z) \notin [(1-\delta)\sqrt{\mu},\frac{\sqrt{\mu}}{1-\delta}]$).
It is not unexpected that verification works best when the noise level is moderate and a solution with the correct rank has been recovered.
In such cases the recovered $Z$ is likely to be close to low rank.
Note for example that in the noise free case, that is, $\bb = \A X_0$ for some low rank $X_0$ then
$Z=(I-\A^* \A)X_0 + \A^*\A X_0 = X_0$.

In Figure~\ref{fig:lowrankunknowndelta} we randomly generated under-determined problems with $\hat{A}$ of size $300 \times 20^2$ with Gaussian $\mathcal{N}(0,\frac{1}{300})$ elements and $\bb$ vector as described previously.
Even though we could not verify the optimality of the obtained solution (since $\delta$ is unknown) our approach consistently outperformed nuclear norm regularization which exhibits the same tendency to achieve a better fit for non-sparse solutions.
In this setting \eqref{eq:nuclear} performed quite poorly, failing to simultaneously achieve a good fit and a correct rank (even for low noise levels).

\subsection{Non-rigid Reconstruction}
\begin{figure*}
\begin{center}
\def\w{9mm}
\begin{tabular}{|c|c|c|c|}
\hline
\includegraphics[width=\w]{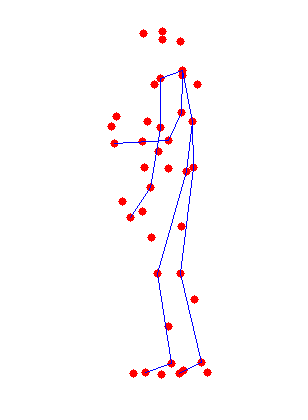}
\includegraphics[width=\w]{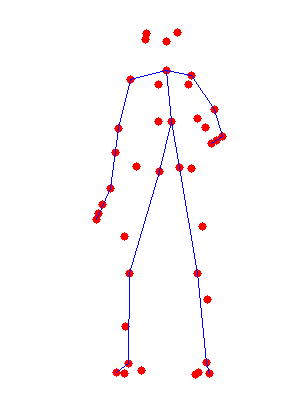}
\includegraphics[width=\w]{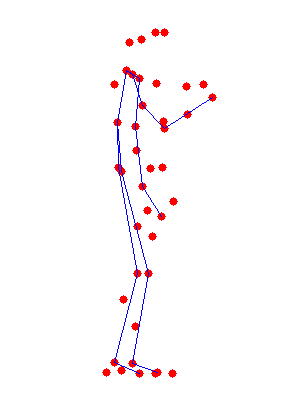}
\includegraphics[width=\w]{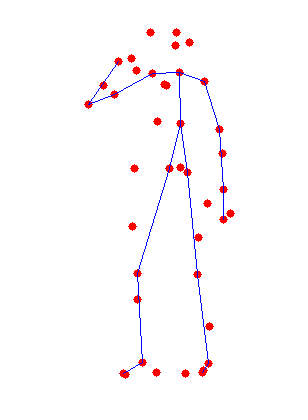}&
\includegraphics[width=\w]{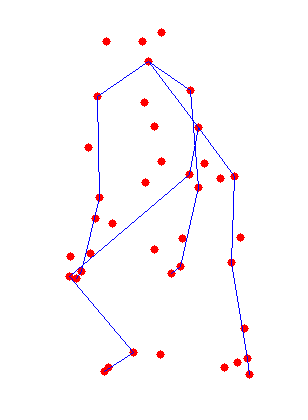}
\includegraphics[width=\w]{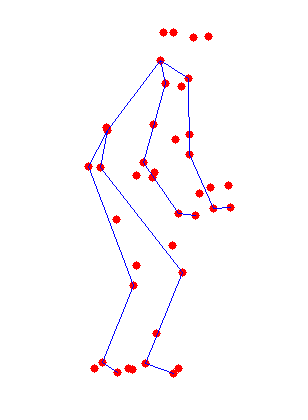}
\includegraphics[width=\w]{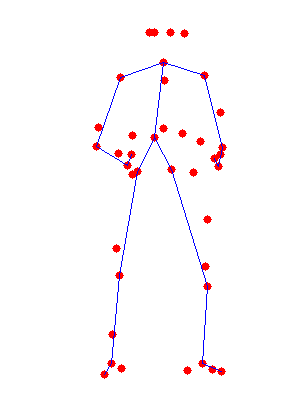}
\includegraphics[width=\w]{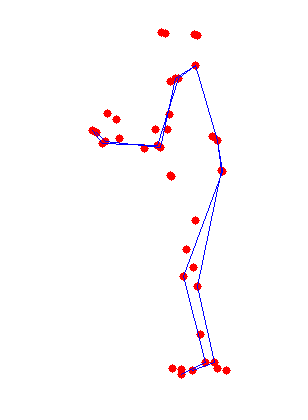}&
\includegraphics[width=\w]{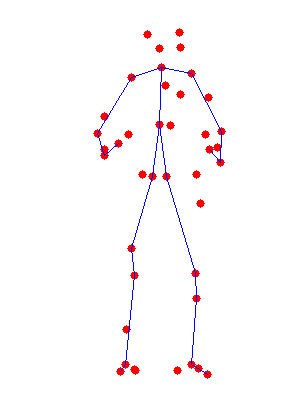}
\includegraphics[width=\w]{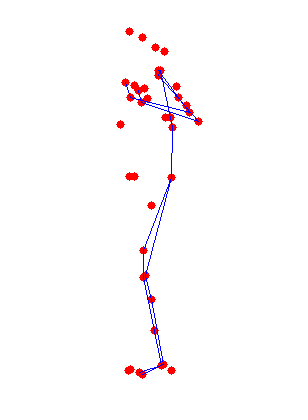}
\includegraphics[width=\w]{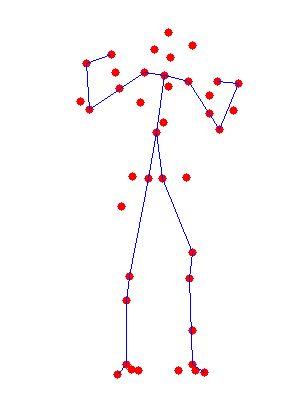}
\includegraphics[width=\w]{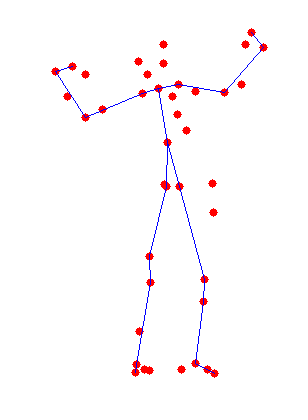}&
\includegraphics[width=\w]{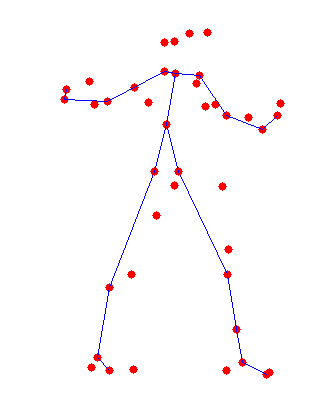}
\includegraphics[width=\w]{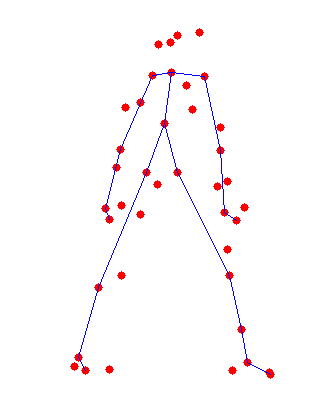}
\includegraphics[width=\w]{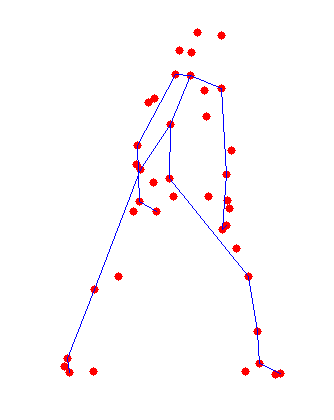}
\includegraphics[width=\w]{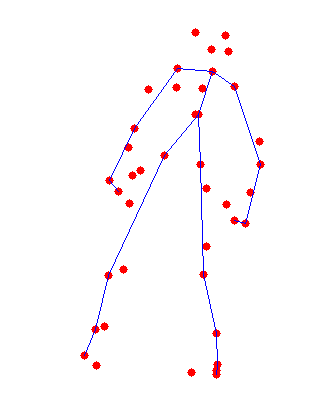}\\
\emph{Drink} & \emph{Pick-up} &
\emph{Stretch} & \emph{Yoga} \\
\hline
\end{tabular}
\end{center}
\caption{Four images from each of the MOCAP data sets.}
\label{fig:mocapshapes}
\def\w{42mm}
\begin{center}
\includegraphics[width=\w]{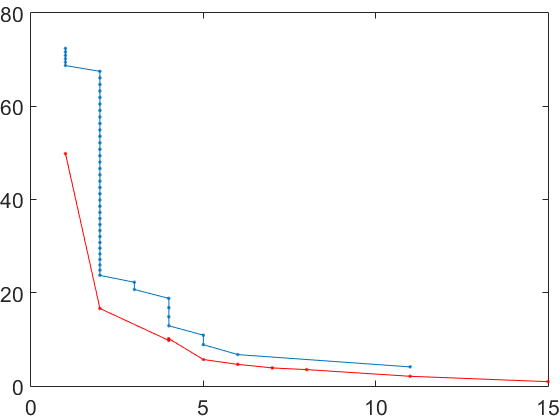}
\includegraphics[width=\w]{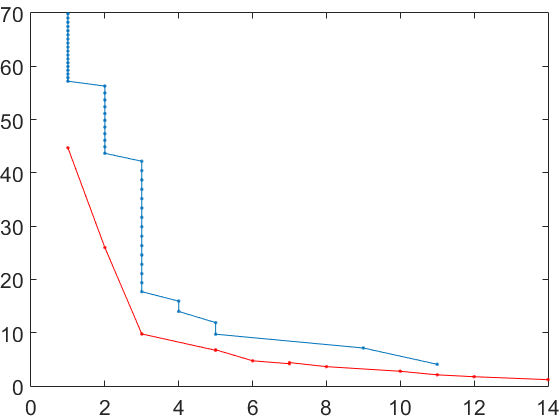}
\includegraphics[width=\w]{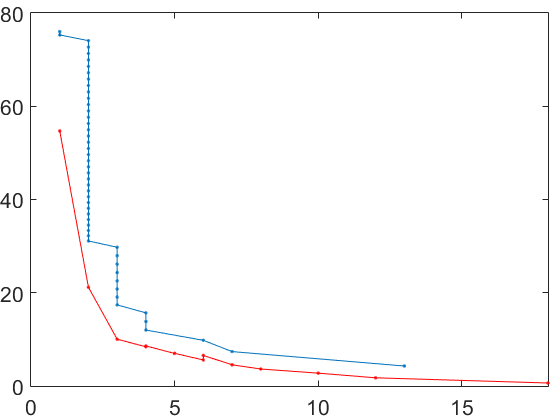}
\includegraphics[width=\w]{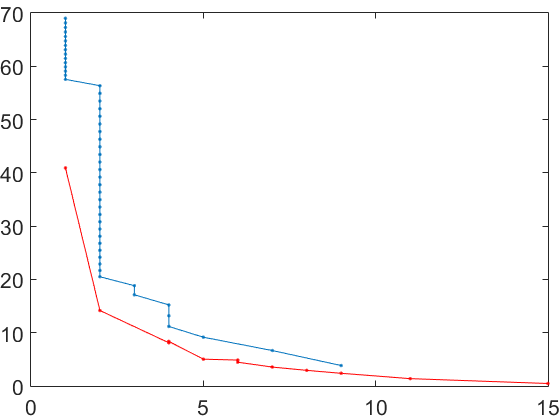}
\end{center}
\caption{Results obtained with \eqref{eq:deformobjrelax} and \eqref{eq:deformobjnuclear} for the four sequences.
Data fit $\|R X - M\|_F$ (y-axis) versus $\rank(X^\#)$ (x-axis) is plotted for various regularization strengths.
Blue curve uses $2 \sqrt{\mu} \|X^\#\|_*$ and red curve $r_\mu(\bfsigma(X^\#))$
with $\mu=1,...,50$.}
\label{fig:deformdatafit}
\begin{center}
\includegraphics[width=\w]{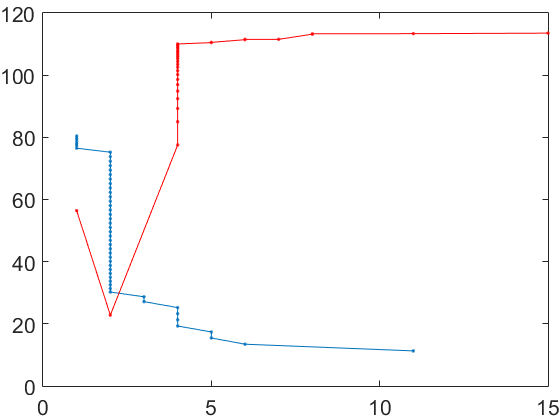}
\includegraphics[width=\w]{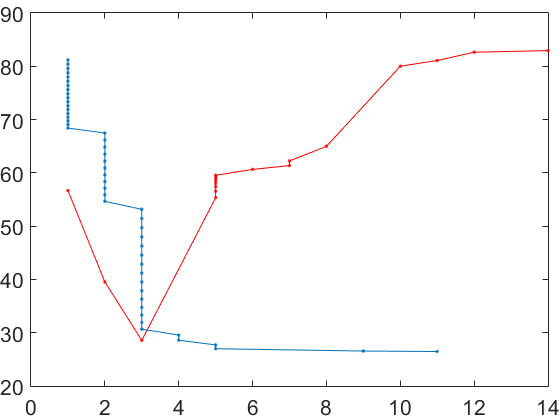}
\includegraphics[width=\w]{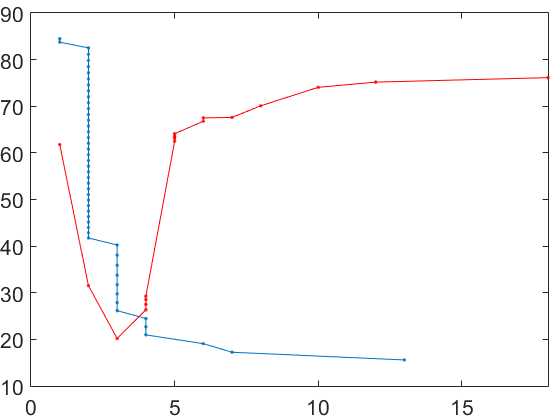}
\includegraphics[width=\w]{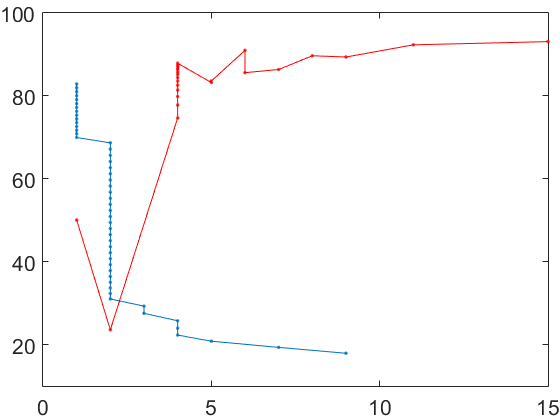}
\end{center}
\caption{Results obtained with \eqref{eq:deformobjrelax} and \eqref{eq:deformobjnuclear} for the four sequences.
Distance to ground truth $\|X - X_{gt}\|_F$ (y-axis) versus $\rank(X^\#)$ (x-axis) is plotted for various regularization strengths.
Blue curve uses $2 \sqrt{\mu} \|X^\#\|_*$ and red curve $r_\mu(\bfsigma(X^\#))$
with $\mu=1,...,50$.}
\label{fig:deformgtdist}
\begin{center}
\includegraphics[width=\w]{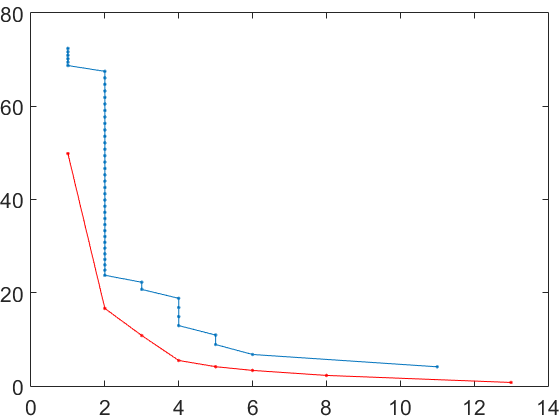}
\includegraphics[width=\w]{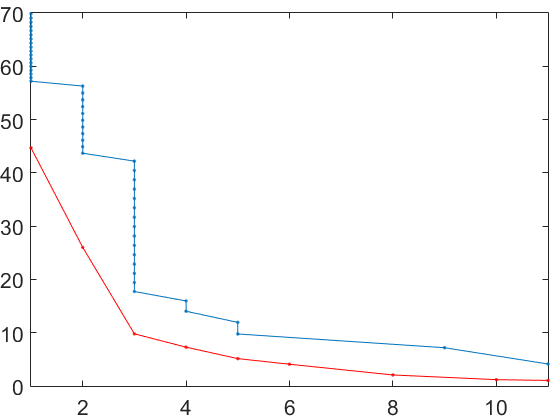}
\includegraphics[width=\w]{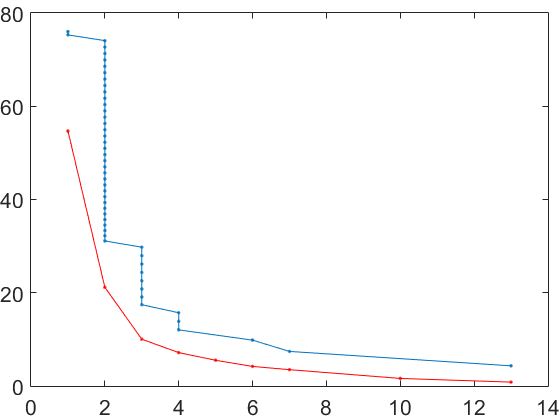}
\includegraphics[width=\w]{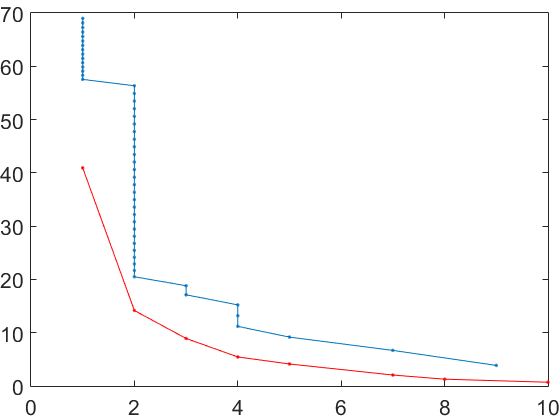}
\end{center}
\caption{Results obtained with \eqref{eq:deformobjrelaxD} and \eqref{eq:deformobjnuclearD} for the four sequences.
Data fit $\|R X - M\|_F$ versus $\rank(X^\#)$ is plotted for various regularization strengths.
Blue curve uses $2 \sqrt{\mu} \|X^\#\|_*$ and red curve $r_\mu(\bfsigma(X^\#))$
with $\mu=1,...,50$.}
\label{fig:deformdatafitD}
\begin{center}
\includegraphics[width=\w]{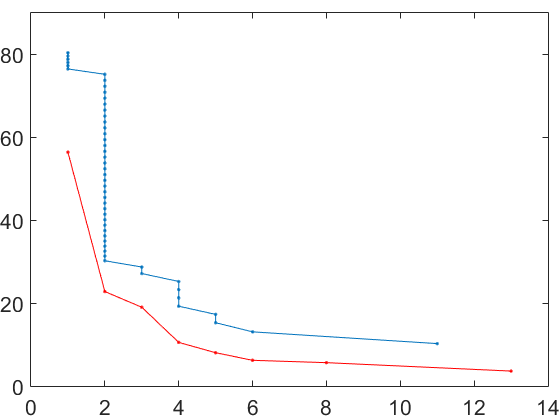}
\includegraphics[width=\w]{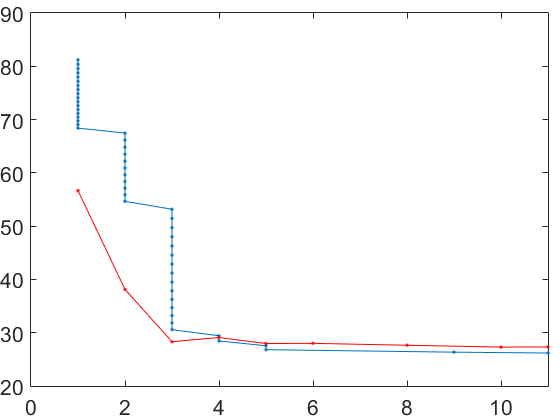}
\includegraphics[width=\w]{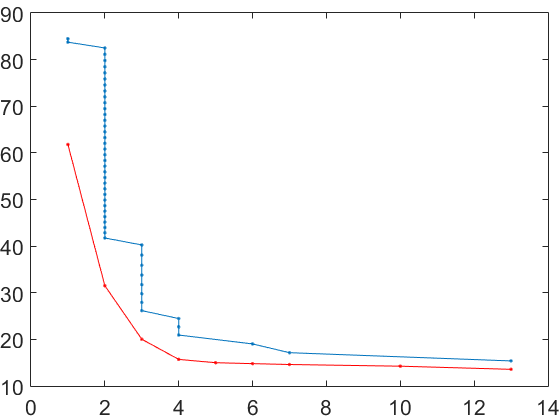}
\includegraphics[width=\w]{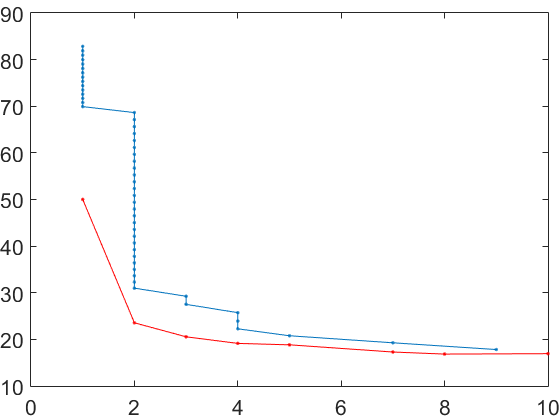}
\end{center}
\caption{Results obtained with \eqref{eq:deformobjrelaxD} and \eqref{eq:deformobjnuclearD}for the four sequences.
Distance to ground truth $\|X - X_{gt}\|_F$ (y-axis) versus $\rank(X^\#)$ (x-axis) is plotted for various regularization strengths.
Blue curve uses $2 \sqrt{\mu} \|X^\#\|_*$ and red curve $r_\mu(\bfsigma(X^\#))$
with $\mu=1,...,50$.}
\label{fig:deformgtdistD}
\end{figure*}

Given projections of a number 3D points on an object, tracked through several images, 
the goal of non-rigid SfM is to reconstruct the 3D positions of the points. Note that the object can be deforming throughout the image sequence. In order to make the problem well posed some knowledge of the deformation has to be included.
Typically one assumes that all possible object shapes are spanned by a low dimensional linear basis
\cite{bregler-etal-cvpr-2000}. Specifically, let $X_f$ be a $3 \times n$-matrix
containing the coordinates of the 3D points when image $f$ was taken.
Here column $i$ of $X_f$ contains the x-,y- and z-coordinates of point $i$.
Under the linearity assumption there is a set of basis shapes $B_k$, $k=1,...,K$ such that
\begin{equation}
X_f = \sum_{k=1}^{K} c_{fk} B_k.
\label{eq:shapebasis}
\end{equation}
Here the basis shapes $B_k$ are of size $3\times n$ and the coefficients $c_{fk}$ are scalars.
The projection of the 3D shape $X_f$ into the image is modeled by $x_f = R_f X_f$. The $2 \times 3$ matrix $R_f$ contains two rows from an orthogonal matrix which encodes camera orientation.

Dai \etal \cite{dai-etal-ijcv-2014} observed that \eqref{eq:shapebasis}
can be interpreted as a low rank constraint by reshaping the matrices.
First, let $X_f^\#$ be the $1 \times 3n$ matrix obtained by concatenation of the 3 rows $X_f$.
Second, let $X^\#$ be $F \times 3n$ with rows $X_f^\#$, $f=1,...,F$.
Then \eqref{eq:shapebasis} can be written
$X^\# = CB^\#,$
where $C$ is the $F \times K$ matrix containing the coefficients $c_{fk}$ and $B^\#$ is a $K \times 3n$ matrix constructed from the basis in the same way as $X^\#$.
The matrix $X^\#$ is thus of at most rank $K$. Furthermore, the complexity of the deformation can be constrained by penalizing the rank of $X^\#$.

To define an objective function we let the $2F\times n$ matrix $M$ be the concatenation of all the projections $x_f$, $f=1,...,F$. Similarly we let the $3F \times n$ matrix $X$ be the concatenation of the $X_f$ matrices.
The objective function proposed by \cite{dai-etal-ijcv-2014} is then
\begin{equation}
\mu \rank (X^\#) + \|R X - M\|^2_F,
\label{eq:deformobj}
\end{equation}
where $R$ is a $2F \times 3F$ block-diagonal matrix containing the $R_f$, $f=1,...,F$ matrices.
Dai \etal proposed to solve \eqref{eq:deformobj} by replacing the rank penalty with $\|X^\#\|_*$.
In this section we compare this to our approach that instead uses $r_\mu (\bfsigma(X^\#))$.
We test the approach on the 4 MOCAP sequences \emph{Drink}, \emph{Pick-up}, \emph{Stretch} and \emph{Yoga} used in \cite{dai-etal-ijcv-2014}, see Figure~\ref{fig:mocapshapes}.
Note that the MOCAP data is generated from motions recorded using real motion-capture-systems and the ground truth is therefore not of low rank.
In Figure~\ref{fig:deformdatafit} we compare the two relaxations
\begin{equation}
r_{\mu} (\bfsigma(X^\#)) + \|R X - M\|^2_F
\label{eq:deformobjrelax}
\end{equation}
and
\begin{equation}
2\sqrt{\mu} \|X^\#\|_* + \|R X - M\|^2_F,
\label{eq:deformobjnuclear}
\end{equation}
for varying values of $\mu$. We plot the obtained data fit versus the obtained rank for $\mu=1,...,50$.
The stair case shape of the blue curve is due to the nuclear norm's bias to small solutions.
When $\mu$ is modified the strength of this bias changes and modifies the value of the data fit even if the modification is not big enough to change the rank.
In contrast the data fit seems to take a (roughly) unique value for each rank when using \eqref{eq:deformobjrelax}.

The relaxation \eqref{eq:deformobjrelax} consistently generates better data fit for all ranks and as an approximation of \eqref{eq:deformobj} it clearly performs better than \eqref{eq:deformobjnuclear}.
This is however not the whole truth. In Figure~\ref{fig:deformgtdist} we also plotted the distance to the ground truth solution. When the obtained solutions are not of very low rank \eqref{eq:deformobjnuclear} is generally better than \eqref{eq:deformobjrelax} despite consistently generating a worse data fit.
A feasible explanation is that when the rank is larger than roughly 3-4 there are multiple solutions with the same rank giving the same projections (witch also implies that the RIP \eqref{eq:matrisRIP} does not hold for this rank).
Note that in Figure~\ref{fig:deformdatafit} the data fit seems to take a unique value for every rank. In short; when the space of feasible deformations becomes too large we cannot uniquely reconstruct the object from image data without additional priors.
In contrast the ground truth distance can take several values for a given rank in Figure~\ref{fig:deformgtdist}. The nuclear norm's bias to solutions with small singular values seems to have a regularizing effect on the problem.

Dai \etal \cite{dai-etal-ijcv-2014} also suggested to further regularize the problem by penalizing derivatives of the 3D trajectories. For this they use a term $\|DX^\#\|_F^2$, where the matrix $D:\mathbb{R}^F \rightarrow \mathbb{R}^{F-1}$ is a first order difference operator.
For completeness we add this term and compare
\begin{equation}
 r_{\mu} (\bfsigma(X^\#)) + \|R X - M\|^2_F +\|DX^\#\|_F^2
\label{eq:deformobjrelaxD}
\end{equation}
and
\begin{equation}
2\sqrt{\mu} \|X^\#\|_* + \|R X - M\|^2_F+ \|DX^\#\|_F^2.
\label{eq:deformobjnuclearD}
\end{equation}
Figures~\ref{fig:deformdatafitD} and~\ref{fig:deformgtdistD} show the results. Our relaxation \eqref{eq:deformobjrelaxD} generally finds better data fit at lower rank than what \eqref{eq:deformobjnuclearD} does. Additionally, for low ranks \eqref{eq:deformobjrelaxD} provides solutions that are closer to ground truth.
When the rank increases most of the regularization becomes more dependent on the derivative prior leading to both methods providing similar results.
\begin{figure*}
\def\w{45mm}
\begin{center}
\begin{tabular}{ccc}
(a) & (b) & (c) \\
\includegraphics[width=\w]{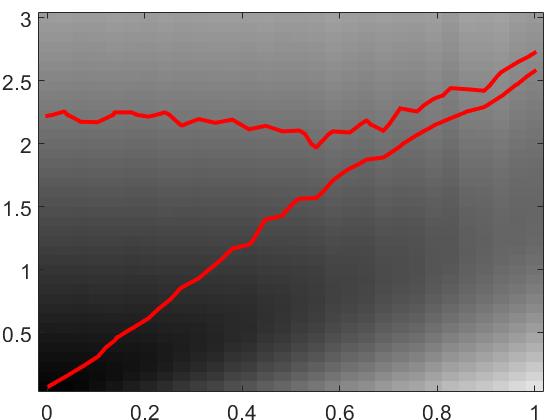} &
\includegraphics[width=\w]{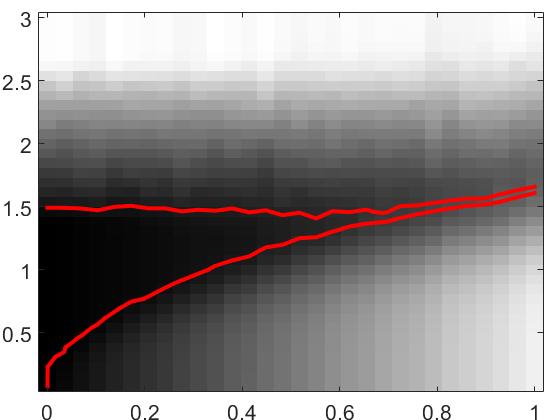} &
\includegraphics[width=\w]{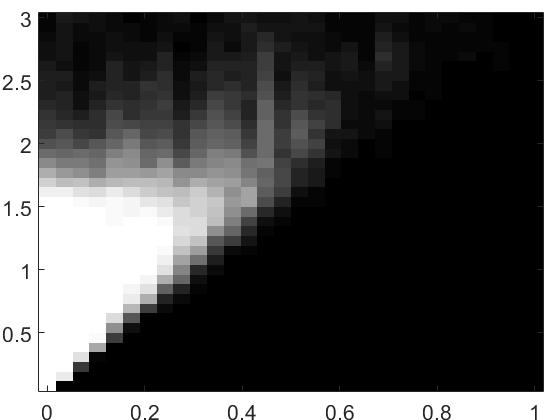} \\
(d) & (e) \\
\includegraphics[width=\w]{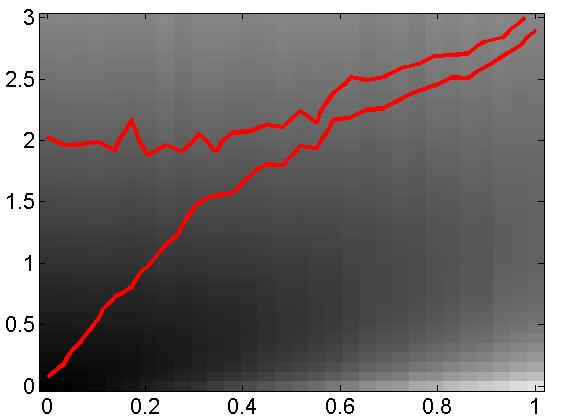} &
\includegraphics[width=\w]{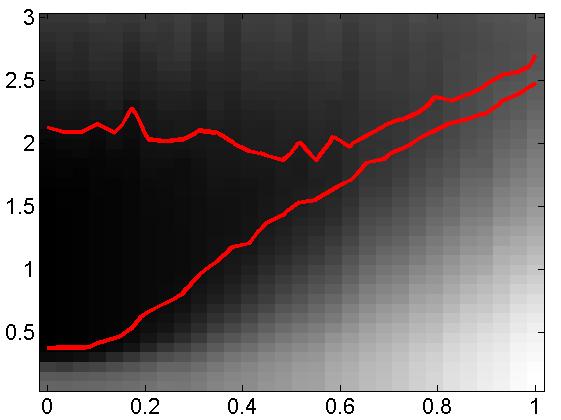} 
\end{tabular}
\end{center}
\caption{Sparse recovery results for varying noise level (x-axis) and regularization strength (y-axis). 
Top row: Random $200 \times 200$ $A$ with $\delta=0.2$.
Bottom row: Random $150 \times 200$ $A$ (and unknown $\delta$).
Plots (a) and (d) show the average distance between the $\ell_1$ regularized and the ground truth solutions for values of $\mu$ between $0$ and $3$.
(red curves marks the area where the obtained solution has $\supp(\x)=10$.)
Plots (b) and (e) show the average distance between \eqref{eq:ourvector} and the ground truth solutions.
Plot (c) shows the number of instances where our method could be verified to provide the global optima for $\delta=0.2$ (white = all, black = none).}
\label{fig:sparse_results}
\end{figure*}

\subsection{Sparse Recovery}\label{sec:sparsityexp}
We conclude the paper with a synthetic experiment on sparse recovery.
For Figure~\ref{fig:sparse_results} (a)-(c) we randomly generated problem instances for sparse recovery. Each instance uses a matrix $A$ of size $200 \times 200$ with $\delta = 0.2$ which was generated by first randomly sampling the elements of a matrix $\tilde{A}$ a Gaussian $\mathcal{N}(0,1)$ distribution. The matrix $A$ was then constructed from $\tilde{A}$ by modifying the singular values to be evenly distributed between $\sqrt{1-\delta}$ and $\sqrt{1+\delta}$.
To generate a ground truth solution and a $\bb$ vector we then randomly select values
for $10$ nonzero elements of $\x$ and computed $\bb=A\x+\epsilon$, where all elements of $\epsilon$ are $\mathcal{N}(0,\sigma^2)$.

The averaged results (over 50 random instances for each $(\sigma,\mu)$ setting) are shown in 
Figure~\ref{fig:sparse_results} (a)-(c). 
Similar to the matrix case it is quite clear that the $\ell_1$ norm (a) suffers from shrinking bias. It consistently gives the best agreement with the ground truth data for values of $\mu$ that are not big enough to generate low cardinality. 
In contrast, \eqref{eq:ourvector} gives the best fit at the correct cardinality for all noise levels. This fit was consistently better than that of \eqref{eq:l1} for all noise levels.
In Figure~\ref{fig:sparse_results} (c) we show the fraction of problem instances that could be verified to be optimal. 

In Figure~\ref{fig:sparse_results} (d) and (e) we tested the case where the elements of an $m \times n$ matrix $A$ are sampled from $\mathcal{N}(0,\frac{1}{m})$ \cite{candes-etal-cpam-2006}.
Here we let $A$ be random $150 \times 200$ matrices and generated the ground truth solution and $\bb$ vector as described previously. Here \eqref{eq:ourvector} consistently outperformed \eqref{eq:l1} which exhibits the same tendency to achieve a better fit for non-sparse solutions.

\section{Conclusions}
In this paper we studied the local minima of a non-convex rank/sparsity regularization approach.
Or main results show that if a RIP property holds then the stationary points are often well separated. This gives an explanation as to why many non-convex approaches such as \cite{daubechies-etal-cpam-2010,candes-etal-2008,oymak-etal-2015,mohan2010iterative} can be observed to work well in practice.
Our experimental evaluation verifies that the proposed approach often recovers better solutions than standard convex counterparts, even when the RIP constraint fails.

{\small
\bibliographystyle{plain}

}

\end{document}